\documentclass[12pt]{article}
\usepackage{amssymb}
\usepackage{amsmath}
\usepackage{amsthm}
\usepackage{xcolor}
\usepackage{booktabs}
\newtheorem{thm}{Theorem}
\newtheorem{prop}{Proposition}
\newtheorem{remark}{Remark}
\newtheorem{defi}{Definition}
\usepackage{amssymb,amsmath}
\usepackage{a4wide}
\numberwithin{equation}{section}
\usepackage{graphicx}
\usepackage{setspace}
\usepackage{natbib}
\usepackage{multicol}
\def\i{\mathrm{i}}
\usepackage{enumerate}

\title{A Frequency-Domain approach to detect nonstationarity in dependent data}
\author{ M. Ould Haye$^1$\footnote{corresponding author : {mohamedouhaye@cunet.carleton.ca}}\and
  A. Philippe$^2$}
\date{$^1$\small  School of Mathematics and Statistics. \\
Carleton University, 1125 Colonel By Dr. Ottawa, ON, Canada, K1S 5B6
\\ $^2$  Nantes Universit\'e, CNRS, Laboratoire de Math\'ematiques Jean Leray,\\ LMJL,
UMR 6629, F-44000 Nantes, France.}

\begin{document}
\maketitle
\begin{abstract}
Distinguishing long memory behaviour from nonstationarity can be very difficult as in both cases the sample autocovariance function decays very slowly. Available stationarity  tests either do not include long memory or fare poorly in terms of  empirical size, especially near the boundary between long memory and nonstationarity. 
We propose a testing procedure based on evaluating  periodograms at different epochs. Limiting distributions established here are easily tractable as sum of weighted independent $\chi^2$ random variables. Moreover, numerical studies are provided to show that the proposed approach seems to outperform existing methods.
\end{abstract}
\textbf{Keywords :} Long memory, nonstationarity, stationarity, Fourier frenquencies.

\section{Introduction}
An $L^2$-stationary process with long memory parameter $d$ generally defined as having a spectral density of the form
\begin{equation}\label{short}
f(\lambda)=\vert \lambda\vert^{-2d}f^*(\lambda)
\end{equation}
with $-1/2<d<1/2$ and $f^*$ is a positive, slowly varying function at zero. Long memory behaviour seems to be
often confused with a lack of stationarity, as time series exhibiting
such a behaviour tend to have a sample autocorrelation function with
large spikes at several lags which is well known to be the signature  of
non-stationarity for many practitioners.  There are many stationarity tests such as  \cite{pptest} tests and  the frequency test of \cite{MR3505787}, which can be seen as  extensions of the augmented Dickey-Fuller tests since their stationarity assumption  includes larger class than i.i.d. case. For instance the former allows for mixing and the latter includes  short memory linear processes. However, the first  statistical test for stationarity under a large umbrella
of dependence, i.e. short and long memory dependence versus non stationarity  is the so-called V/S test introduced by \cite{MR2328526}.\\
We develop a statistical test that uses an estimate  of the memory parameter $d$. Unlike the  V/S test, our test detects the stationarity for all $d$ in (-1/2,1/2) whereas the  V/S test requires $d$ to be in a compact $[a,b]\subset(-1/2,1/2)$. Another limitation in V/S consists in it  being oversized, with its empirical size significantly much larger than the nominal level, for $d\ge.4$, and therefore one cannot choose $b$ too close to 1/2. In contrast, the proposed test does not suffer form  this issue maintaining  empirical sizes close to the nominal level even for $d$  close to 1/2.  
From a sample $X_1,\ldots,X_n$ of the process $(X_t)$, we are
interested in building a testing procedure to discriminate between dependence and non-stationarity. \\
The proposed statistic is built from periodograms taken at different epochs. More precisely, the procedure is as follows:
we split our initial sample $X_1,\ldots,X_n$ into $m$ blocks (or
epochs), each of size $\ell$, and  we construct  the periodogram
$I_{n,i}$ on the $i$th block $X_{(i-1) \ell+1},\ldots,X_{i\ell }$.
Let
\begin{equation}\label{final}
Q_{n,m}(s,d)=\left(m^{-2d}\right)\sum_{j=1}^s \frac{I_n(\lambda_j)}{\frac{1}{m}\sum_{h=1}^mI_{n,h}(\lambda'_j)},
\end{equation}
where $\lambda_j=2\pi j/n$, and  $\lambda'_j=2\pi j/\ell$, $j=1,\ldots,s$, for fixed $s$, are the Fourier frequencies, and 
\begin{equation}\label{perio}
I_n(\lambda_j)=\frac{1}{2\pi n}\left\vert\sum_{t=1}^nX_te^{\i t\lambda_j}\right\vert^2,\qquad I_{n,h}(\lambda_j')=\frac{1}{2\pi \ell}\left\vert\sum_{t=(h-1)\ell+1}^{h\ell}X_te^{\i t\lambda_j'}\right\vert^2.
\end{equation}
The statistic $Q_{n,m}(s,d)$ can be viewed as sum of self normalized periodograms $I_n(\lambda_j)$, $j=1,\ldots,s$.
In  our 2018 paper (\cite{Gromykov}), we considered the statistic $Q_{n,m}(s,0)$ to test short memory versus long memory, and we only established its asymptotic distribution  under short memory $d=0$. It is worth noticing that establishing limiting distributions under long or anticipative memory ($-1/2<d<1/2$) requires  modifying the short memory statistic, including a normalizing coefficient $m^{-2d}$, as it appears in (\ref{final}), as well as two sets of Fourier frequencies $\lambda_j$ and $\lambda'_j$. Of course, this will result in different limiting distributions due to the long/anticipative memory that will strongly affect the asymptotic behaviour of the properly normalized periodograms in (\ref{perio}).    

We assume that $m=m(n),\textrm{ and } \ell=\ell(n)\to\infty$ as $n\to\infty$. It is  important to emphasize the fact that $m$ and $\ell$ increase with $n$ and are not constant, and that $m=n/\ell$.
We are simply using notation $m$ and $\ell$ rather than $m(n)$ and
$\ell(n)$ only for the sake of simplicity. 
The rest of the paper is organized as follows. 
Section 2 contains main limiting theorems related to the statistic $Q_{n,m}(s,d)$, including its behaviour under a wide range of nonstationary alternatives. 
Moreover, we propose a  test statistic to detect nonstationarity  based on the asymptotic properties of the statistic $Q_{n,m}(s,d)$.
Section 3 contains some  Monte
Carlo study to illustrate the performance of our proposed method compared to V/S.
Section 4  contains  all the proofs.

\section{Main results}\label{sec:2}
 Let $(X_t)_{t\ge0}$ be a linear process of the form
\begin{equation}\label{linear}X_t =
  \sum_{j=0}^\infty a_j \epsilon_{t-j},
\end{equation}
where $a_j$ are square summable,  $a_0=1$, and  
 where $(\epsilon_j)$ are i.i.d. random variables with zero mean and finite fourth moment.
 Denote $\sigma^2=\mathbb{E}(\epsilon_1^2)$.
 \begin{defi}~\\
 $\bullet$
We say that the process $X_t$ is an $I(d)$ with $-1/2\le d<1/2$ if it  is a linear  process as defined in (\ref{linear}) with coefficients $(a_j)_j$ satisfying
 \begin{align}
 & \sum_{j=0}^\infty\vert a_j\vert<\infty && \text{ if } d=0
  \label{summable} \\
  &  a_j\sim c(d)\,j^{-1+d}   && \text{ if }  0< d < 1/2  \label{positived}\\
 \sum_{j=0}^\infty a_j=0 & \text{ and }  a_j=c(d)\,j^{-1+d}(1+O(j^{-1}))  && \text{ if } -1/2 < d < 0   \label{negatived1}
\end{align}
where $c(d)>0$.\\
$\bullet$ We  say that a stochastic process $X_t$ is an $I(d)$ with $1/2\le d<3/2$ if $X_t-X_{t-1}$ is an $I(d-1)$ process.
\end{defi}
Condition (\ref{summable}), imposed when $d=0$, implies that the covariance function $\gamma(h)$ is summable, i.e.,
 $$
 \sum_{h=-\infty}^\infty\vert\gamma(h)\vert<\infty.
 $$
 According to Proposition 3.2.1. of \cite{MR2977317}, when $d\neq 0$ the conditions \eqref{positived} and   (\ref{negatived1}) implies that the \begin{equation}\label{gammad}
\gamma(h)\sim C(d) h^{2d-1},\qquad\textrm{as }h\to\infty,\qquad\textrm{for some constant }C(d) >0,
 \end{equation}
 where $a_n\sim b_n$ means that $a_n/b_n\to1$ as $n\to\infty$.
 
For $I(d)$ processes ($-1/2<d<3/2$), \cite{OuldHayePhilippe2026}   prove the convergence of the discrete Fourier transforms (DFT) at first   Fourier frequencies  $\lambda_1,\ldots,\lambda_s$, with fixed $s\ge1$: 
\begin{eqnarray}\label{dft}\lefteqn{
Z_n(s,d):=\Bigg[\left(\frac{1}{n^{1/2+d}}\sum_{k=1}^n\cos(k\lambda_{1})X_k\right),
\cdots,\left(\frac{1}{n^{1/2+d}}\sum_{k=1}^n\cos(k\lambda_{s})X_k\right),}\nonumber\\
&& \left(\frac{1}{n^{1/2+d}}\sum_{k=1}^n\sin(k\lambda_1)X_k\right),\cdots,\left(\frac{1}{n^{1/2+d}}\sum_{k=1}^n\sin(k\lambda_{s})X_k\right)\Bigg]\overset{d}{\rightarrow}\mathcal{N}\left(0,\Sigma(d)\right), 
\end{eqnarray}
where the covariance matrix  $\Sigma(d)$ is defined by 
\begin{equation}\label{Sigma}
\Sigma(d) =
\begin{pmatrix}
\Sigma^{(c)}(d) & 0\\[3pt]
0 &\Sigma^{(s)}(d)
\end{pmatrix},
\end{equation}
where for 
 $i,j=1,\ldots,s$, 
\[
\Sigma^{(c)}_{ij}(d) =
\begin{cases}
\displaystyle \frac{\delta(-1/2)}{2}\int_{[0,1]^2}\!\cos(2\pi i x)\cos(2\pi j y)\,
\bigl(-\log|x-y|\bigr)\,dx\,dy,
& \text{if } d=\tfrac{1}{2},\\[10pt]
\displaystyle\frac{\delta(d-1)}{2}\int_{[0,1]^2}\!\cos(2\pi i x)\cos(2\pi j y)\,
|x-y|^{2d-1}\,dx\,dy,
& \text{if } 1/2<d<3/2,\\[10pt]
-\delta(d)\Bigg[a_{ij}(d)+
2\pi^2 i j\displaystyle\int_{[0,1]^2}\sin(2\pi ix)\sin(2\pi jy)\vert x- y\vert^{2d+1}dxdy\Bigg], 
& \text{if } -1/2<d<1/2,
\end{cases}
\]
where
$$
a_{ij}(d)=1-(2d+1)\int_0^1x^{2d}\left(\cos(2\pi ix)+\cos(2\pi jx)\right)dx,
$$
and
\begin{equation}\label{const delta}
\delta(d)=\begin{cases}8\sigma^2_\epsilon c^2(-1/2),&\textrm{if }d=-1/2,\\\\
\sigma^2_\epsilon\left(\displaystyle\sum_{i=0}^\infty a_i\right)^2,&\textrm{if }d=0,\\\\
   \frac{C(d)}{d(2d+1)},&\textrm{if }0<\vert d\vert<1/2.
\end{cases}
\end{equation}
Similarly, 
\[
\Sigma^{(s)}_{ij}(d) =
\begin{cases}
\displaystyle\frac{\delta(-1/2)}{2} \int_{[0,1]^2}\!\sin(2\pi i x)\sin(2\pi j y)\,
\bigl(-\log|x-y|\bigr)\,dx\,dy,
& \text{if } d=\tfrac{1}{2},\\[10pt]
\frac{\delta(d-1)}{2}\displaystyle\int_{[0,1]^2}\!\sin(2\pi i x)\sin(2\pi j y)\,
|x-y|^{2d-1}\,dx\,dy,
& \text{if } 1/2<d<3/2,\\[10pt]
-\delta(d)(2\pi^2 ij)\displaystyle\int_{[0,1]^2}\!\cos(2\pi i x)\cos(2\pi j y)\,
|x-y|^{2d+1}\,dx\,dy,
& \text{if } -1/2<d<1/2.
\end{cases}
\]

In what  follows, we will focus on the statistic $Q_{n,m}(s,d)$ defined in (\ref{final}). In the next theorem  we give the  asymptotic
distribution of $Q_{n,m}(s,d)$ for $I(d)$ processes.
 \begin{thm}\label{Th1}
Let $(X_t)$ be an $I(d)$ process with memory parameter $d \in (-1/2,\,3/2)$. Then the statistic $Q_{n,m}(s,d)$ defined in (\ref{final}) satisfies
\begin{equation}\label{d limit}
Q_{n,m}(s,d) \;\overset{\mathcal{D}}{\longrightarrow}\;
Q(s,d) = \sum_{i=1}^{2s} \zeta_i(d)\, Q_i, 
\end{equation}
where $\{Q_i\}$ are i.i.d.\ $\chi^2(1)$ random variables, and $\{\zeta_i(d)\}$ are the eigenvalues of the  matrix
$\Sigma(d)D^{-1}(d)$, where
$D(d)$ is the $2s\times 2s$  diagonal matrix with diagonal entries
\begin{equation} \label{D}
D_{ii}(d)=D_{i+s,i+s}(d)=\left(\Sigma^{(c)}_{ii}(d) + \Sigma^{(s)}_{ii}(d)\right),\qquad i=1,\ldots,s.\end{equation}
\end{thm}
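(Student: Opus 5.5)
Our plan is to split $Q_{n,m}(s,d)$ into a numerator and a denominator and treat them separately. We will show that the numerator, suitably normalized, converges in distribution to a quadratic form in the Gaussian vector of (\ref{dft}). We will show that the denominator, under the matching normalization, converges in probability to a deterministic constant. Slutsky's lemma then combines the two. Finally, diagonalizing the limiting quadratic form gives the weighted $\chi^2$ representation.

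\emph{Numerator.} We write $2\pi n\, I_n(\lambda_j) = \bigl(\sum_k \cos(k\lambda_j)X_k\bigr)^2 + \bigl(\sum_k \sin(k\lambda_j)X_k\bigr)^2$, which gives
\[
\frac{2\pi}{n^{2d}}\, I_n(\lambda_j) = Z_{n,j}(s,d)^2 + Z_{n,j+s}(s,d)^2 .
\]
Then $\frac{2\pi}{n^{2d}}\sum_{j=1}^s I_n(\lambda_j)\, D_{jj}^{-1}(d) = Z_n^\top D^{-1}(d) Z_n$. By (\ref{dft}) and the continuous mapping theorem, this converges to $Z^\top D^{-1} Z$ with $Z\sim\mathcal N(0,\Sigma(d))$.

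\emph{Denominator.} Each block $X_{(h-1)\ell+1},\dots,X_{h\ell}$ has length $\ell$. The frequency $\lambda'_j = 2\pi j/\ell$ is the $j$th Fourier frequency of that block, and the phase shift $e^{\i (h-1)\ell\lambda'_j}=1$. So each $\frac{2\pi}{\ell^{2d}} I_{n,h}(\lambda'_j)$ is distributed like a copy of the same quantity at sample size $\ell$. For $d\ge 1/2$ this holds after accounting for the level at the start of each block: the DFT at a nonzero Fourier frequency kills constants, so $I_{n,h}$ depends only on increments within the block. By (\ref{dft}) and uniform integrability (from finite fourth moments, and convergence of second moments of the DFTs, which the covariance computation behind (\ref{dft}) provides), $\mathbb{E}\,\frac{2\pi}{\ell^{2d}} I_{n,h}(\lambda'_j) \to \Sigma^{(c)}_{jj}+\Sigma^{(s)}_{jj} = D_{jj}(d)$. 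We then show a weak law of large numbers across blocks:
\[
\frac{1}{m}\sum_{h=1}^m \frac{2\pi}{\ell^{2d}} I_{n,h}(\lambda'_j) \xrightarrow{P} D_{jj}(d),
\]
by proving $\operatorname{Var}\bigl(\frac1m\sum_h \cdot\bigr)\to 0$. Given convergence of the means, it suffices to show that the covariances of the normalized block periodograms $\ell^{-2d}I_{n,h}$ and $\ell^{-2d}I_{n,h'}$ are small for $|h-h'|$ large, uniformly, in a Cesàro sense. For the linear process we expand the covariance via the fourth-cumulant term, which is $O(1/\ell)$-type small, plus squares of cross-covariances of block DFTs. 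The cross-covariance between DFTs of blocks $h$ and $h'$ is a double sum of $\gamma(t-t')\cos\cos$ terms. Using (\ref{gammad}) and summation by parts in the oscillating kernel, it is bounded by $\ell^{2d}\,O(|h-h'|^{2d-3})$ or similar, hence summable after normalization. For $d\ge1/2$ the same argument is applied to the differenced $I(d-1)$ process, rewriting the block DFT via summation by parts in terms of increments.

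\emph{Combining.} Since $n^{2d}/\ell^{2d}=m^{2d}$, we have
\[
Q_{n,m}(s,d)=\sum_j \frac{n^{-2d}\,2\pi I_n(\lambda_j)}{\frac1m\sum_h \ell^{-2d}\,2\pi I_{n,h}(\lambda'_j)} .
\]
Slutsky's lemma then gives the limit $Z^\top D^{-1}Z$. Writing $Z=\Sigma^{1/2}N$ with $N$ standard normal, we get $Z^\top D^{-1}Z = N^\top \Sigma^{1/2}D^{-1}\Sigma^{1/2}N = \sum_i \zeta_i Q_i$. Here the $\zeta_i$ are the eigenvalues of the symmetric matrix $\Sigma^{1/2}D^{-1}\Sigma^{1/2}$, which has the same eigenvalues as $\Sigma D^{-1}$.

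The main obstacle is the law of large numbers for the denominator under strong dependence, $d$ near $1/2$ and beyond: block periodograms are correlated through long-range covariances, and we must verify that the DFT at Fourier frequency $\lambda'_j\neq0$ decorrelates distant blocks fast enough. The oscillation of $\cos(k\lambda'_j)$ over a full block is exactly what makes this possible. We would first try second-order summation by parts against the smooth tail of $\gamma$.
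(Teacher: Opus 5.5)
Your proposal is correct and follows essentially the same route as the paper: \eqref{dft} with continuous mapping for the numerator, and $L^2$ convergence of the normalized block-periodogram average to $D_{jj}(d)$ for the denominator (the mean from the covariance convergence behind \eqref{dft}; the variance from the fourth-moment expansion of the linear process and Ces\`aro-vanishing covariances between blocks, with $d\ge 1/2$ reduced to increments because the DFT at $\lambda'_j$ kills block levels), followed by Slutsky and the fact that $\Sigma^{1/2}D^{-1}\Sigma^{1/2}$ (the paper uses $D^{-1/2}\Sigma D^{-1/2}$) has the same eigenvalues as $\Sigma D^{-1}$. One small caution: \eqref{gammad} gives no control of second differences of $\gamma$, so for $d<1/2$ the $|h-h'|^{2d-3}$ rate via summation by parts is not justified under the stated assumptions; it is also not needed, since the crude bound $O(|h-h'|^{2d-1})$ on the normalized cross-covariance (what the paper effectively uses, via Cauchy--Schwarz on the moving-average coefficients) already makes the Ces\`aro average vanish because $2d-1<0$.
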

The following proposition, shows continuity of the limiting distribution as a function of $d$ at the border $d=1/2$:
\begin{prop}\label{continuity1}
   The function 
   $$
   d\mapsto\Sigma(d)D^{-1}(d)
   $$ 
   is continuous at $d=1/2.$ 
\end{prop}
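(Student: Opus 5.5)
The plan rests on one observation: $\Sigma(d)D^{-1}(d)$ is unchanged if $\Sigma(d)$ is multiplied by any nonzero scalar $\kappa(d)$, because $D(d)$ is built from the diagonal of $\Sigma(d)$. So the constants $\delta(d)$, $\delta(d-1)$, $\delta(-1/2)$ and the factor $1/2$ drop out entirely. It therefore suffices to find, on each side of $d=1/2$, a normalization $\kappa(d)$ such that $\kappa(d)\Sigma(d)$ converges entrywise to a common matrix $\Sigma_0$, with $\Sigma_0$ proportional to $\Sigma(1/2)$ and with nonzero diagonal. For $\Sigma(1/2)$ itself this is automatic: it is proportional to the matrix of integrals $\int\!\!\int \phi_i(x)\psi_j(y)(-\log|x-y|)\,dx\,dy$, where $\phi,\psi$ are the cosine/sine pairs. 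Its diagonal is nonzero because $-\log|x-y|$ is a positive definite kernel on $[0,1]$, so $D(1/2)$ has positive entries. Continuity of $D^{-1}$ then follows from continuity of the normalized diagonal.

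\emph{Right limit, $d\downarrow 1/2$.} Write $|x-y|^{2d-1}=1+(2d-1)\log|x-y|+O((2d-1)^2\log^2|x-y|)$, which holds uniformly in an integrable sense on $[0,1]^2$. For $i,j\ge1$ the constant term contributes $\int_0^1\cos(2\pi ix)\,dx\int_0^1\cos(2\pi jy)\,dy=0$, and likewise for the sines. Hence
\[
\frac{1}{2d-1}\int\!\!\int\phi_i(x)\phi_j(y)|x-y|^{2d-1}\,dx\,dy\to\int\!\!\int\phi_i(x)\phi_j(y)\log|x-y|\,dx\,dy
\]
by dominated convergence. Taking $\kappa(d)=-2/\big((2d-1)\delta(d-1)\big)$ therefore gives the limit $\Sigma(1/2)$ up to the fixed factor $\delta(-1/2)/2$.

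\emph{Left limit, $d\uparrow 1/2$.} Let $g_{ij}(d)$ denote the bracket in $\Sigma^{(c)}_{ij}$ and $h_{ij}(d)=2\pi^2ij\int\!\!\int\cos\cos|x-y|^{2d+1}$ the one in $\Sigma^{(s)}_{ij}$. Both are smooth in $d$ near $1/2$, since $|x-y|^{2d+1}$ is bounded with bounded $d$-derivative.

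First I show that both vanish at $d=1/2$. For $g_{ij}$: using $\int_0^1x\cos(2\pi kx)\,dx=0$ gives $a_{ij}(1/2)=1$. Expanding $|x-y|^2=x^2-2xy+y^2$ and using $\int_0^1\sin(2\pi kx)\,dx=0$ together with $\int_0^1x\sin(2\pi kx)\,dx=-1/(2\pi k)$ gives $2\pi^2ij\int\!\!\int\sin\sin|x-y|^2=-1$. Hence $g_{ij}(1/2)=0$. For $h_{ij}$ the same expansion gives $h_{ij}(1/2)=0$.

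Consequently $g_{ij}(d)/(1-2d)\to -\tfrac12 g_{ij}'(1/2)$, and similarly for $h_{ij}$. The derivative brings down a factor $2\log|x-y|$, giving $g'_{ij}(1/2)=-2(2\,a\text{-term})+4\pi^2ij\int\!\!\int\sin\sin|x-y|^2\log|x-y|$, where the $a$-term is $\int_0^1x\log x(\cos+\cos)\,dx$. The analogous formula holds for $h'_{ij}$.

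\emph{The key identity.} The main step, and the one I expect to be the real obstacle, is to show that these derivatives equal a common constant times $\int\!\!\int\phi_i\phi_j\log|x-y|$ (with $\phi=\cos$ for $g$, and $\phi=\sin$ for $h$). I would prove it by integrating by parts twice: once in $x$ and once in $y$, applied to $\int\!\!\int\phi_i(x)\phi_j(y)\,\partial_x\partial_y F(x-y)$ with $F(u)=|u|^{2d+1}$, valid for $d$ in a neighbourhood of $1/2$.

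For $d>1/2$ one has $\partial_x\partial_y|x-y|^{2d+1}=-(2d+1)(2d)|x-y|^{2d-1}$. The integration by parts then shows that $-\big[a_{ij}(d)+2\pi^2ij\int\!\!\int\sin\sin|x-y|^{2d+1}\big]$ equals a constant $c(d)$ times $\int\!\!\int\cos\cos|x-y|^{2d-1}$. The boundary terms produce exactly $a_{ij}(d)$: the terms at $x,y\in\{0,1\}$ give $1$ and $x^{2d+1}$ evaluated against the cosines. The same computation for sines gives a boundary-free identity for $h_{ij}$.

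Both sides of this identity are analytic in $d$ on a complex neighbourhood of $1/2$ once the common vanishing factor $(2d-1)$ is divided out. The right-hand side's continuation through $1/2$ is the log integral found in the right limit. The derivatives at $1/2$ therefore match that integral with one common constant for all $i,j$ and for both the $c$ and $s$ blocks.

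If the analytic-continuation route proves delicate, the fallback is to verify the identity directly at $d=1/2$. This means computing $\int\!\!\int\cos\cos\,\partial_x\partial_y\big(|x-y|^2\log|x-y|\big)$ by the same double integration by parts. The relevant derivative is $\partial_x\partial_y\big(u^2\log|u|\big)=-(2\log|u|+3)$, and the constant $3$ integrates to zero against the cosines.

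With that identity in hand, taking $\kappa(d)=1/\big((1-2d)\delta(d)\big)$ gives the same limit matrix from the left. The two one-sided limits of $\Sigma(d)D^{-1}(d)$ then both equal $\Sigma(1/2)D^{-1}(1/2)$, which proves Proposition~\ref{continuity1}.
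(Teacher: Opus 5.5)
Your proposal is correct and is essentially the paper's argument. It uses the same three ingredients: the scale-invariance of $\Sigma(d)D^{-1}(d)$; the double integration-by-parts identity turning the brackets for $d<1/2$ into $-d(2d+1)\int\!\!\int\phi_i(x)\phi_j(y)|x-y|^{2d-1}\,dx\,dy$ (cosines for $\Sigma^{(c)}$, sines for $\Sigma^{(s)}$, same factor in both blocks); and the first-order expansion of $|x-y|^{2d-1}$ in $2d-1$, whose constant term integrates to zero. The differences are only technical: the paper integrates by parts directly for $0<d<1/2$ while you do it for $d>1/2$ and continue analytically, and once the identity holds near $1/2$ your check that $g_{ij}(1/2)=h_{ij}(1/2)=0$ and the L'H\^opital step are redundant, since the left limit reduces to the right-limit computation.
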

 \begin{prop}\label{prop2}
Let   $Y_t$ be  an $I(d)$ process with  $d\in(-1/2,1/2)$, 
and consider   a nonstationary process with deterministic  trend $X_t$:   
\begin{equation}\label{trend d}X_t=g_n(t)+Y_t\end{equation} where $g_n(t)=n^\beta g(t/n)$, with $\beta\ge0$, and where $g$ is a piece-wise continuous\footnote{the interval [0,1] can be divided into a finite number of sub-intervals such that the function is continuous on each open sub-interval, with finite limit at each endpoint.}  satisfying  
\begin{equation}\label{nonzero}
\int_0^1g(x)e^{\i 2\pi jx}dx\neq0, \qquad\textrm{for some }j\in\{1,2,\ldots,s\}.
\end{equation}
Then, for all $\delta \leq 1/2 $  
\begin{equation} \label{AsympH1} Q_{n,m}(s,\delta)\overset{P}{\longrightarrow}\infty, \end{equation}
where $\overset{P}{\longrightarrow}$ denotes the convergence in probability. 
 \end{prop}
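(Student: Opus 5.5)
We work directly with the definition (\ref{final}) and show that the numerator is of strictly larger order than $m^{2\delta}$ times the denominator. Write $X_t=g_n(t)+Y_t$. The DFT is linear, so for each $j\le s$ we have $w_n(\lambda_j)=w_n^g(\lambda_j)+w_n^Y(\lambda_j)$ with $w_n(\lambda)=\sum_{t=1}^n X_te^{\i t\lambda}$.

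\textbf{Step 1 (deterministic part of the numerator).} By a Riemann-sum argument, which is valid for piecewise continuous $g$ since the discontinuities contribute $O(1)$ terms out of $n$,
$$
n^{-1-\beta}\sum_{t=1}^n g_n(t)e^{\i t\lambda_j}=\frac1n\sum_{t=1}^n g(t/n)e^{\i 2\pi j t/n}\to\int_0^1 g(x)e^{\i2\pi jx}dx=:G_j .
$$
By (\ref{nonzero}) there is a $j_0\le s$ with $G_{j_0}\neq0$, so $|w_n^g(\lambda_{j_0})|^2\sim n^{2+2\beta}|G_{j_0}|^2$. For the random part, (\ref{dft}) gives $w_n^Y(\lambda_j)=O_P(n^{1/2+d})$. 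Since $d<1/2\le 1/2+\beta$, the trend dominates. Because all summands in $Q_{n,m}$ are nonnegative, it suffices to keep only the term $j_0$, and
$$
I_n(\lambda_{j_0})\ge c\,n^{1+2\beta}
$$
with probability tending to one.

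\textbf{Step 2 (denominator).} For block $h$ and $\lambda'_j=2\pi j/\ell$, the block DFT of the trend is
$$
\sum_{t=(h-1)\ell+1}^{h\ell} n^\beta g(t/n)e^{\i t\lambda'_j}.
$$
On a block, $t/n$ ranges over an interval of length $1/m$. On blocks where $g$ is continuous, $g$ is nearly constant there, and since $\sum_{t\in\text{block}}e^{\i t\lambda'_j}=0$ the sum is $n^\beta\ell\,o(1)$. On the finitely many blocks containing a discontinuity it is $O(n^\beta\ell)$. The cleanest route is to avoid the uniform-continuity refinement and use the crude bound
$$
\frac1m\sum_h I^g_{n,h}(\lambda'_j)\le C n^{2\beta}\ell .
$$
Using $|a+b|^2\le2|a|^2+2|b|^2$, we get
$$
\frac1m\sum_h I_{n,h}(\lambda'_j)\le 2\cdot\frac1m\sum_h I^g_{n,h}+2\cdot\frac1m\sum_h I^Y_{n,h}.
$$
By stationarity of $Y$ and (\ref{dft}) applied with sample size $\ell$, $\mathbb{E}\,I^Y_{n,h}(\lambda'_j)=O(\ell^{2d})$. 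Markov's inequality then gives $\frac1m\sum_h I^Y_{n,h}=O_P(\ell^{2d})$, where for $d<0$ the bound $\ell^{2d}\le 1$ suffices. Hence the denominator is $O_P(n^{2\beta}\ell+\ell^{2d})$.

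\textbf{Step 3 (combining).} The $j_0$ term satisfies
$$
\frac{I_n(\lambda_{j_0})}{\text{denominator}}\gtrsim \frac{n^{1+2\beta}}{n^{2\beta}\ell+\ell^{2d}}\ge c\min\!\big(m,\;n^{1+2\beta}\ell^{-2d}\big).
$$
Multiplying by $m^{-2\delta}$ gives a lower bound of order $\min(m^{1-2\delta},\,n^{1+2\beta}\ell^{-2d}m^{-2\delta})$.

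The second quantity diverges because $n^{1+2\beta}\ge n=m\ell$ and $\ell^{1-2d}\to\infty$. The first quantity diverges when $\delta<1/2$. The main obstacle is the boundary case $\delta=1/2$, where the crude bound only yields $m^{0}$. There the block-trend contribution must be shown to be $o(n^{2\beta}\ell)$. I would do this with the refinement above, using uniform continuity of $g$ on each continuity piece. On a smooth block, the block DFT equals $n^\beta\sum_t (g(t/n)-g(t_h/n))e^{\i t\lambda'_j}$, which is at most $n^\beta\ell\,\omega(1/m)$, where $\omega$ is the modulus of continuity. The finitely many discontinuity blocks contribute $O(n^{2\beta}\ell/m)$ to the average. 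This gives a denominator bound of $o_P(n^{2\beta}\ell)+O_P(\ell^{2d})$, so the ratio is at least of order $\min(m/o(1),\ldots)$, and hence $m^{-1}\cdot(\cdot)\to\infty$ as well. This establishes (\ref{AsympH1}).
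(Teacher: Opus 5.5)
Your proof is correct and is essentially the paper's argument. The trend forces $I_n(\lambda_{j_0})\asymp n^{1+2\beta}$. In the averaged block periodograms the trend contributes only $o(n^{2\beta}\ell)$: this uses uniform continuity on each continuity piece, $\sum_{t}e^{\i t\lambda'_j}=0$ over a block, and the fact that there are only $O(1)$ exceptional blocks. The stationary part contributes $O_P(\ell^{2d})$. So $m^{-1}$ times the ratio already diverges, which covers every $\delta\le 1/2$ at once since $m^{-2\delta}\ge m^{-1}$; your separate crude-bound case for $\delta<1/2$ is therefore unnecessary.

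In places your bookkeeping is tidier than the paper's. Retaining only the index $j_0$ is the correct reduction: the paper asks for (\ref{loin}) at every $j$, but its argument delivers this only when $\int_0^1g(x)e^{\i 2\pi jx}dx\neq0$. Using $|a+b|^2\le2|a|^2+2|b|^2$ together with Markov's inequality also replaces the paper's separate cross-term and variance estimates. One point needs a better justification: the bound $\mathbb{E}\,I^Y_{n,h}(\lambda'_j)=O(\ell^{2d})$ should be drawn from the second-moment convergence behind (\ref{dft}), not from the distributional limit itself.
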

 \begin{proof}
 The proof is relegated to Section  \ref{ap:proof2} of the appendix.
\end{proof}

 \begin{remark}\label{rem2}
We note in passing that condition (\ref{nonzero}) is satisfied by most of nonconstant functions. For example, nonnegative step functions (allowing change in the mean), polynomials, etc.
\end{remark}

We want to test the stationarity of  $I(d)$ processes (that is $-1/2<d<1/2$) against  the class of nonstationary processes $I(d)$, $1/2<d<3/2$ and deterministic trend \eqref{trend d}. That is,  let $H_0$ be the stationarity assumption and $H_1$ the nonstationarity one.
This is the same context as VS test (see \cite{MR2328526}.  The fact that our limiting distribution is well defined for  $d=1/2$ gives our test a considerable advantage over V/S. Indeed, as the limiting distribution of V/S is degenerated at $d=1/2$,  
   $\widehat d$ had to be restricted to a compact $[-a,a]\subset(-1/2,1/2)$ in V/S and its implementation. This leads to a blind spot $(a,1/2)$ in the stationarity null hypothesis. This explains V/S's high empirical sizes observed when  $d$ is close to 1/2 (see simulation results in \cite{MR2328526} and the following Section \ref{simulations}). 

\begin{prop}\label{prop22}
Let $q_\alpha(s,d)$ be $Q(s,d)$'s quantile of order $1-\alpha$ where $0<\alpha<1$. For fixed $\alpha$, the mapping $(-1/2,1/2]\to\mathbb{R}$, $d\mapsto q_\alpha(s,d)$ is continuous.
\end{prop}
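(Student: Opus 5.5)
We argue through the eigenvalues. By Theorem \ref{Th1}, $Q(s,d)=\sum_{i=1}^{2s}\zeta_i(d)Q_i$, where the $\zeta_i(d)$ are the eigenvalues of $M(d):=\Sigma(d)D^{-1}(d)$. Since $D(d)$ is diagonal with positive entries, $M(d)$ is similar to the symmetric positive semidefinite matrix $D^{-1/2}\Sigma D^{-1/2}$. Hence the $\zeta_i(d)$ are real and nonnegative. Moreover, $\operatorname{tr}M(d)=2s\cdot$(average), and in fact $\operatorname{tr}(D^{-1}\Sigma)$ is a sum of ratios $\Sigma^{(c)}_{ii}/(\Sigma^{(c)}_{ii}+\Sigma^{(s)}_{ii})+\Sigma^{(s)}_{ii}/(\Sigma^{(c)}_{ii}+\Sigma^{(s)}_{ii})=s$. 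Therefore the eigenvalues are not all zero. The plan has three steps.

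\begin{enumerate}[(i)]
\item Show that $d\mapsto M(d)$ is continuous on $(-1/2,1/2]$.
\item Deduce that the ordered eigenvalues $\zeta_{(1)}(d)\ge\dots\ge\zeta_{(2s)}(d)$ are continuous, and hence that $Q(s,d)$ varies continuously in distribution.
\item Show that the distribution function $F_d$ of $Q(s,d)$ is continuous and strictly increasing near its quantile, so that quantiles pass to the limit.
\end{enumerate}

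For step (i) on the open interval $(-1/2,1/2)$, the entries of $\Sigma(d)$ are given by explicit integrals of the form $\int_{[0,1]^2}(\text{trig})\,|x-y|^{2d+1}\,dx\,dy$ and $\int_0^1 x^{2d}\cos(2\pi ix)\,dx$, multiplied by the constant $\delta(d)$. Dominated convergence gives continuity of these integrals in $d$, since $|x-y|^{2d+1}\le 1$ and $x^{2d}$ is integrable uniformly for $d$ in compacts of $(-1/2,1/2)$. The constant $\delta(d)$ may jump, for instance at $d=0$, and may depend on $d$ through $C(d)$ and $c(d)$. It cancels in $M(d)$, however, because $\Sigma(d)$ and $D(d)$ carry the same scalar factor $\delta(d)$. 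We therefore only need $D_{ii}(d)/\delta(d)$ to stay bounded away from zero, which holds because $\Sigma(d)$ is a nondegenerate limiting covariance. At $d=1/2$ itself, continuity is exactly Proposition \ref{continuity1}.

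For step (ii), the eigenvalues are the roots of the characteristic polynomial, whose coefficients are continuous in $d$. Continuity of polynomial roots then gives continuity of the ordered (real) eigenvalues. Consequently, for $d_k\to d$, we have $\sum_i\zeta_{(i)}(d_k)Q_i\to\sum_i\zeta_{(i)}(d)Q_i$ almost surely, using the same i.i.d.\ $\chi^2(1)$ variables. This gives $F_{d_k}\to F_d$ at every continuity point of $F_d$.

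Step (iii) is the main obstacle, since quantile continuity needs $F_d$ to be continuous and strictly increasing at $q_\alpha(s,d)$. Because at least one $\zeta_i(d)>0$, $Q(s,d)$ is a nonnegative mixture containing a $\zeta\chi^2(1)$ component. Its law therefore has a density that is positive on $(0,\infty)$ (convolution of a positive density with nonnegative variables), so $F_d$ is continuous and strictly increasing on $[0,\infty)$. Convergence $F_{d_k}\to F_d$ then holds uniformly by Pólya's theorem. The standard argument finishes the proof: for $\varepsilon>0$, $F_d(q-\varepsilon)<1-\alpha<F_d(q+\varepsilon)$ with $q=q_\alpha(s,d)$, so for large $k$ the same strict inequalities hold with $F_{d_k}$. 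This forces $|q_\alpha(s,d_k)-q|\le\varepsilon$.
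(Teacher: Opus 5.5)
Your proposal is correct and follows essentially the same route as the paper. Continuity of $\Sigma(d)D^{-1}(d)$, using Proposition~\ref{continuity1} at $d=1/2$, gives convergence in distribution of $Q(s,d_n)$; continuity and strict monotonicity of the limiting cdf, combined with P\'olya's theorem, then give convergence of the quantiles. Along the way you fill in points the paper leaves implicit: interior continuity by dominated convergence with the cancellation of $\delta(d)$, and the trace identity $\operatorname{tr}M(d)=s$, which guarantees a positive eigenvalue and hence a strictly increasing cdf.
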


We now describe the construction of a critical region $R_n$ derived from the statistic $Q_{n,m}(s,d)$ to obtain a testing procedure with asymptotic significance level $\alpha$:
$$
\alpha=\underset{-1/2<d<1/2}{\sup}\,\,\underset{n\to\infty}{\lim}P_d\left((X_1,\ldots,X_n)\in R_n\right),
$$
where $P_d$ is the probability distribution when the observations come from  an   $I(d)$ process.
 Deriving a decision rule from the statistic $Q_{n,m}(s,d)$ requires the estimation of the parameter $d$. 
We consider local Whittle estimator of the memory parameter $d$ defined by
\begin{equation}\label{lwe}
\widehat{d}_n=\underset{\delta\in (-1/2 ,1/2]}{\operatorname{argmin}}\,\,U_n(\delta) 
\end{equation}
where the contrast function $U_n$ is defined by
\[U_n(\delta)=\log\left(\frac{1}{b_n}\sum_{j=1}^{b_n}\lambda_j^{2\delta}I_n(\lambda_j)\right)-\frac{2\delta}{b_n}\sum_{j=1}^{b_n}\log\lambda_j,\]
 and the bandwidth parameter  satisfies $b_n\to \infty$ and $b_n=o(n)$.

For $d\in (-1/2 , 1/2)$, if $\mathbb{E}(\epsilon_0^4)<\infty$ and   if there exists $\beta\in(0,2]$,  $c_0>0$ and $c_1\neq0$ such that
 $$
 f^*(\lambda)=c_0+c_1\vert\lambda\vert^\beta+o\left(\vert\lambda\vert^\beta\right),\qquad\textrm{as }\lambda\to0,
 $$
 then $\widehat d$ is  a $\log(n)$-consistent estimator of $d$; see Corollary 1 and Proposition 5 of \cite{Dalla}.\\
 \begin{prop}\label{empirical size 1}
Consider the following critical region 
  \begin{equation}\label{RNN}
  R_n=\{(X_1,\ldots,X_n)\in\mathbb{R}^n,\quad Q_{m,n}(s,\widehat d)>q_\alpha(s,\widehat d)\},
  \end{equation}
  where $q_\alpha(s,\widehat d)$ is the quantile of order ($1-\alpha)$ of $Q(s,\widehat d)$    The test based on $R_n$ has an asymptotic significance level $\alpha$ and is consistent under the alternative $H_1.$
  \end{prop}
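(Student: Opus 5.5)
The proof has two parts: the size statement under $H_0$, and consistency under $H_1$ for both kinds of alternative, namely $I(d)$ with $1/2<d<3/2$ and the deterministic trend model \eqref{trend d}.

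\textbf{Asymptotic level.} Fix $d\in(-1/2,1/2)$ and work under $P_d$. By the result of \cite{Dalla} quoted above, $\widehat d$ is $\log(n)$-consistent, i.e. $\log(n)(\widehat d-d)\to0$ in probability. The only place $\widehat d$ enters the statistic is the factor $m^{-2\widehat d}$. Since $m\le n$,
$$Q_{n,m}(s,\widehat d)=m^{-2(\widehat d-d)}Q_{n,m}(s,d),\qquad m^{-2(\widehat d-d)}=\exp\{-2(\widehat d-d)\log m\}\overset{P}{\to}1 .$$
Theorem \ref{Th1} and Slutsky's lemma then give $Q_{n,m}(s,\widehat d)\overset{\mathcal D}{\to}Q(s,d)$. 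Proposition \ref{prop22} and the consistency of $\widehat d$ give $q_\alpha(s,\widehat d)\overset{P}{\to}q_\alpha(s,d)$.

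The distribution of $Q(s,d)$ is a positive combination of $\chi^2$ variables with nonzero weights, because $\Sigma(d)$ is a nondegenerate covariance matrix. It is therefore continuous, so $q_\alpha(s,d)$ is a continuity point. A second application of Slutsky's lemma gives
$$P_d(Q_{n,m}(s,\widehat d)>q_\alpha(s,\widehat d))\to P(Q(s,d)>q_\alpha(s,d))=\alpha .$$
This holds for every $d\in(-1/2,1/2)$, so the supremum of the limits equals $\alpha$.

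\textbf{Consistency under trend alternatives.} Under \eqref{trend d} the local Whittle estimator is defined with argmin over $(-1/2,1/2]$, so $\widehat d\le 1/2$ always. I would show the divergence in Proposition \ref{prop2} holds uniformly in $\delta\le1/2$. The cleanest route is monotonicity: $m^{-2\delta}\ge m^{-1}$ for $\delta\le 1/2$, so
$$Q_{n,m}(s,\widehat d)\ge Q_{n,m}(s,1/2)\overset{P}{\to}\infty$$
by Proposition \ref{prop2} with $\delta=1/2$. Next, $q_\alpha(s,\cdot)$ is continuous on the interval $(-1/2,1/2]$, which is closed at the right end, by Proposition \ref{prop22}. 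I still need it bounded, and I would bound it via tail control. The eigenvalues of $\Sigma D^{-1}$ are bounded since the trace of $\Sigma D^{-1}$ is $2s$ up to the diagonal-normalization structure; in fact, the diagonal of $\Sigma D^{-1}$ sums to $2s$. Since the eigenvalues are nonnegative, each is at most $2s$, so $Q(s,d)$ is stochastically dominated by $2s\chi^2(2s)$. Hence $\sup_d q_\alpha(s,d)<\infty$, and the rejection probability tends to one.

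\textbf{Consistency under $I(d)$, $1/2<d<3/2$.} Again $\widehat d\le1/2$, so the same monotonicity gives
$$Q_{n,m}(s,\widehat d)\ge m^{-1}Q_{n,m}(s,d)\,m^{2d}=m^{2d-1}Q_{n,m}(s,d).$$
By Theorem \ref{Th1}, $Q_{n,m}(s,d)$ converges in distribution to $Q(s,d)$, which is a.s. positive. Since $m^{2d-1}\to\infty$, the left side diverges in probability. Combined with the bounded quantiles, rejection has probability tending to one.

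\textbf{Main obstacle.} The delicate point is the size step: $\log(n)$-consistency must be precisely what neutralizes $m^{-2(\widehat d-d)}$. It suffices because $\log m\le\log n$. I also expect care to be needed in justifying the continuity of the limit law at the quantile. For the alternatives, the key facts are that the argmin's constraint forces $\widehat d\le1/2$ and that the quantile bound $\sup_d q_\alpha(s,d)<\infty$ holds.
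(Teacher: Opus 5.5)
Your proof is correct and follows essentially the paper's route: Slutsky plus continuity of $q_\alpha(s,\cdot)$ under $H_0$, and under $H_1$ the monotonicity bound $Q_{n,m}(s,\widehat d)\ge Q_{n,m}(s,1/2)$ from $\widehat d\le 1/2$. Your uniform bound on $q_\alpha(s,d)$ and your explicit treatment of the $I(d)$, $1/2<d<3/2$ alternative fill in steps the paper leaves implicit. One small slip: the trace of $\Sigma(d)D^{-1}(d)$ is $s$, not $2s$, since each pair contributes $(\Sigma^{(c)}_{ii}+\Sigma^{(s)}_{ii})/D_{ii}=1$; this only sharpens your domination bound to $s\,\chi^2(2s)$.
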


  \section{Finite Sample  Performance}\label{simulations}
\subsection{Choice of the tuning parameters $m,s$ }
The simulation study investigates the influence of the tuning parameters
$s$ and $m=\lfloor n^\gamma\rfloor$ on both the empirical size and the power
of the proposed test. Tables \ref{tab:H0-500} and \ref{tab:H0-5000} investigate empirical sizes under various choices of $m=[n^\gamma]$ and $s$ for two sample sizes; $n=500$ and $n=5000$ respectively. Empirical powers are also compared  for different values of these tuning parameters in Tables \ref{tab:H1-500} and \ref{tab:H1-5000}.
Overall, these simulations indicate that the block exponent $\gamma$ has a much
larger impact on the empirical size than the parameter $s$. The presence of a
positive autoregressive component mainly affects the choice of the block size,
whereas the influence of $s$ remains comparatively moderate.

Taking these observations together, we recommend using
$m=\lfloor n^{1/2}\rfloor$ as a default choice. Regarding the parameter $s$,
the simulations suggest selecting $s=1$ for moderate sample sizes
($n=500$), while increasing to $s=2$ for larger samples ($n=5000$).
For $n=500$, this choice preserves a satisfactory empirical size, whereas
for $n=5000$ it yields a substantial gain in power with only a limited impact
on the empirical level, even in the presence of a short-range autoregressive
component.

Therefore, our recommended default specification is
\begin{equation}\label{tuning choice}
(s,m)=
\begin{cases}
(1,\lfloor n^{1/2}\rfloor), & n=500,\\[1mm]
(2,\lfloor n^{1/2}\rfloor), & n=5000.
\end{cases}
\end{equation}

This recommendation is not intended to be optimal for every possible
dependence structure, but rather to provide a practical compromise between
empirical size and power over the range of models considered in this study.

\begin{table}[ht]
\centering
\small
\begin{tabular}{c|ccccc|ccccc|ccccc}
\toprule
& \multicolumn{5}{c|}{$s=1$}
& \multicolumn{5}{c|}{$s=2$}
& \multicolumn{5}{c}{$s=3$}\\
\cmidrule(lr){2-6}
\cmidrule(lr){7-11}
\cmidrule(lr){12-16}
$d$
& \multicolumn{5}{c|}{$\gamma$}
& \multicolumn{5}{c|}{$\gamma$}
& \multicolumn{5}{c}{$\gamma$}\\
& $.3$ & $.4$ & $.5$ & $.6$ & $.7$
& $.3$ & $.4$ & $.5$ & $.6$ & $.7$
& $.3$ & $.4$ & $.5$ & $.6$ & $.7$\\
\midrule
\multicolumn{16}{c}{\textbf{ARFIMA(0,$d$,0) }}\\
\midrule
$-.25$
& \textcolor{red}{.082} & \textcolor{red}{.065} & .038 & .019 & .043
& \textcolor{red}{.091} & .050 & .016 & .025 & .056
& \textcolor{red}{.096} & .039 & .016 & .027 & \textcolor{red}{.085}\\

$0$
& \textcolor{red}{.094} & .053 & .027 & .015 & .032
& \textcolor{red}{.097} & .049 & .013 & .014 & .037
& \textcolor{red}{.104} & .035 & .018 & .029 & .055\\

$.25$
& \textcolor{red}{.070} & .045 & .036 & .016 & .015
& \textcolor{red}{.067} & .048 & .018 & .013 & .015
& \textcolor{red}{.072} & .029 & .014 & .021 & .024\\

$.35$
& \textcolor{red}{.077} & .059 & .042 & .025 & .020
& \textcolor{red}{.089} & .058 & .023 & .014 & .022
& \textcolor{red}{.084} & .038 & .015 & .011 & .012\\

$.45$
& \textcolor{red}{.079} & \textcolor{red}{.061} & .040 & .023 & .019
& \textcolor{red}{.073} & \textcolor{red}{.060} & .027 & .021 & .012
& \textcolor{red}{.080} & .043 & .023 & .012 & .007\\

$.49$
& \textcolor{red}{.089} & \textcolor{red}{.085} & \textcolor{red}{.067}
& .053 & .046
& \textcolor{red}{.099} & \textcolor{red}{.073}
& \textcolor{red}{.063} & .044 & .029
& \textcolor{red}{.108} & \textcolor{red}{.081}
& .058 & .044 & .018\\
\midrule
\multicolumn{16}{c}{\textbf{ARFIMA(1,$d$,0) with AR coefficient $\phi=0.7$}}\\
\midrule
$-.25$
& .011 & .002 & .000 & .001 & .047
& .013 & .000 & .000 & .045 & \textcolor{red}{.241}
& .010 & .002 & .020 & \textcolor{red}{.181} & \textcolor{red}{.454}\\

$0$
& .016 & .003 & .000 & .000 & .022
& .011 & .000 & .000 & .042 & \textcolor{red}{.177}
& .012 & .001 & .018 & \textcolor{red}{.163} & \textcolor{red}{.340}\\

$.25$
& .009 & .003 & .002 & .000 & .018
& .010 & .003 & .005 & .031 & \textcolor{red}{.116}
& .012 & .004 & .023 & \textcolor{red}{.114} & \textcolor{red}{.200}\\

$.35$
& .029 & .016 & .011 & .035 & \textcolor{red}{.111}
& .027 & .020 & .052 & \textcolor{red}{.143} & \textcolor{red}{.326}
& .040 & .041 & \textcolor{red}{.157} & \textcolor{red}{.357} & \textcolor{red}{.493}\\

$.45$
& .052 & \textcolor{red}{.060} & \textcolor{red}{.076}
& \textcolor{red}{.143} & \textcolor{red}{.339}
& \textcolor{red}{.078} & \textcolor{red}{.101}
& \textcolor{red}{.217} & \textcolor{red}{.420}
& \textcolor{red}{.659}
& \textcolor{red}{.100} & \textcolor{red}{.175}
& \textcolor{red}{.407} & \textcolor{red}{.696}
& \textcolor{red}{.822}\\

$.49$
& \textcolor{red}{.081} & \textcolor{red}{.092}
& \textcolor{red}{.122} & \textcolor{red}{.245}
& \textcolor{red}{.453}
& \textcolor{red}{.110} & \textcolor{red}{.149}
& \textcolor{red}{.304} & \textcolor{red}{.557}
& \textcolor{red}{.771}
& \textcolor{red}{.158} & \textcolor{red}{.272}
& \textcolor{red}{.549} & \textcolor{red}{.770}
& \textcolor{red}{.878}\\
\bottomrule
\multicolumn{16}{c}{\textbf{ARFIMA(1,$d$,0) with AR coefficient $\phi=-0.7$}}\\
\midrule
$-.25$
& \textcolor{red}{.073} & .039 & .014 & .005 & .003
& \textcolor{red}{.081} & .035 & .006 & .000 & .001
& \textcolor{red}{.089} & .025 & .005 & .000 & .000\\

$0$
& \textcolor{red}{.082} & \textcolor{red}{.060} & .032 & .011 & .001
& \textcolor{red}{.087} & .051 & .016 & .006 & .000
& \textcolor{red}{.094} & .039 & .014 & .002 & .000\\

$.25$
& \textcolor{red}{.082} & \textcolor{red}{.072} & .039 & .018 & .004
& \textcolor{red}{.096} & .058 & .013 & .006 & .000
& \textcolor{red}{.103} & .048 & .011 & .001 & .000\\

$.35$
& \textcolor{red}{.082} & \textcolor{red}{.066} & .036 & .014 & .004
& \textcolor{red}{.090} & \textcolor{red}{.065} & .013 & .004 & .000
& \textcolor{red}{.106} & .053 & .016 & .000 & .000\\

$.45$
& \textcolor{red}{.093} & .057 & .047 & .021 & .001
& \textcolor{red}{.101} & \textcolor{red}{.061} & .023 & .009 & .000
& \textcolor{red}{.099} & .050 & .019 & .000 & .000\\

$.49$
& \textcolor{red}{.090} & \textcolor{red}{.081} & \textcolor{red}{.060}
& .034 & .010
& \textcolor{red}{.117} & \textcolor{red}{.076}
& .041 & .023 & .001
& \textcolor{red}{.120} & \textcolor{red}{.081}
& .036 & .010 & .000\\
\bottomrule
\end{tabular}
\caption{Empirical rejection probabilities, at nominal level $\alpha=5\%$, for ARFIMA$(0,$d$,0)$ and ARFIMA(1,d,0) for  $n=500$, $s\in\{1,2,3\}$ and $m=\lfloor n^\gamma\rfloor$, where $\gamma\in\{.3,.4,.5,.6,.7\}$. Number of replications is 1000. Values greater than or equal to $6\%$ are shown in red.}
\label{tab:H0-500}
\end{table}
\begin{table}[ht]
\centering
\small
\begin{tabular}{c|ccccc|ccccc|ccccc}
\toprule
& \multicolumn{5}{c|}{$s=1$}
& \multicolumn{5}{c|}{$s=2$}
& \multicolumn{5}{c}{$s=3$}\\
\cmidrule(lr){2-6}
\cmidrule(lr){7-11}
\cmidrule(lr){12-16}
$d$
& \multicolumn{5}{c|}{$\gamma$}
& \multicolumn{5}{c|}{$\gamma$}
& \multicolumn{5}{c}{$\gamma$}\\
& $.3$ & $.4$ & $.5$ & $.6$ & $.7$
& $.3$ & $.4$ & $.5$ & $.6$ & $.7$
& $.3$ & $.4$ & $.5$ & $.6$ & $.7$\\
\midrule
\multicolumn{16}{c}{\textbf{ARFIMA(0,$d$,0) }}\\
\midrule
$-.25$
& \textcolor{red}{.070} & .050 & .055 & .045 & .058
& \textcolor{red}{.073} & .048 & .039 & .033 & .050
& \textcolor{red}{.078} & .038 & .029 & .029 & .036\\

$0$
& \textcolor{red}{.077} & \textcolor{red}{.065} & .052 & .044 & .052
& \textcolor{red}{.079} & \textcolor{red}{.061} & .040 & .042 & .044
& \textcolor{red}{.080} & .052 & .029 & .034 & .044\\

$.25$
& \textcolor{red}{.081} & .057 & .054 & .040 & .041
& \textcolor{red}{.093} & .059 & .045 & .035 & .029
& \textcolor{red}{.097} & .054 & .039 & .041 & .038\\

$.35$
& .054 & .048 & .045 & .036 & .038
& \textcolor{red}{.069} & .051 & .034 & .029 & .030
& \textcolor{red}{.069} & .046 & .025 & .027 & .020\\

$.45$
& \textcolor{red}{.064} & .054 & .044 & .036 & .035
& \textcolor{red}{.077} & .049 & .038 & .024 & .022
& \textcolor{red}{.069} & .033 & .025 & .020 & .016\\

$.49$
& \textcolor{red}{.060} & .054 & .049 & .043 & .036
& \textcolor{red}{.070} & .054 & .044 & .039 & .033
& \textcolor{red}{.073} & .043 & .040 & .032 & .024\\
\midrule
\multicolumn{16}{c}{\textbf{FARIMA(1,0,0) with  AR coefficient $\phi=0.7$}}\\
\midrule
$-.25$
& .021 & .005 & .003 & .004 & \textcolor{red}{.113}
& .024 & .003 & .003 & .026 & \textcolor{red}{.411}
& .014 & .004 & .005 & \textcolor{red}{.113} & \textcolor{red}{.707}\\

$0$
& .030 & .015 & .011 & .008 & \textcolor{red}{.077}
& .017 & .005 & .002 & .026 & \textcolor{red}{.334}
& .016 & .006 & .004 & \textcolor{red}{.095} & \textcolor{red}{.663}\\

$.25$
& .013 & .007 & .002 & .003 & .030
& .014 & .010 & .002 & .022 & \textcolor{red}{.237}
& .011 & .001 & .000 & \textcolor{red}{.072} & \textcolor{red}{.541}\\

$.35$
& .024 & .009 & .001 & .001 & .029
& .016 & .004 & .004 & .011 & \textcolor{red}{.171}
& .016 & .004 & .004 & \textcolor{red}{.060} & \textcolor{red}{.448}\\

$.45$
& .035 & .021 & .014 & .016 & \textcolor{red}{.090}
& .034 & .017 & .020 & \textcolor{red}{.063} & \textcolor{red}{.311}
& .029 & .023 & .026 & \textcolor{red}{.136} & \textcolor{red}{.580}\\

$.49$
& \textcolor{red}{.066} & \textcolor{red}{.063} & .059
& \textcolor{red}{.085} & \textcolor{red}{.199}
& \textcolor{red}{.076} & \textcolor{red}{.061}
& \textcolor{red}{.073} & \textcolor{red}{.172}
& \textcolor{red}{.516}
& \textcolor{red}{.073} & \textcolor{red}{.064}
& \textcolor{red}{.097} & \textcolor{red}{.346}
& \textcolor{red}{.787}\\
\midrule
\multicolumn{16}{c}{\textbf{ARFIMA$(1,d,0)$ with AR coefficient $\phi=-0.7$}}\\
\midrule
$-.25$
& \textcolor{red}{.062} & .048 & .029 & .017 & .005
& \textcolor{red}{.078} & .044 & .029 & .020 & .007
& \textcolor{red}{.092} & .041 & .020 & .014 & .000\\

$0$
& \textcolor{red}{.068} & .050 & .040 & .030 & .017
& \textcolor{red}{.070} & .044 & .036 & .018 & .003
& \textcolor{red}{.075} & .046 & .019 & .012 & .002\\

$.25$
& \textcolor{red}{.090} & \textcolor{red}{.065} & .054 & .046 & .021
& \textcolor{red}{.077} & .059 & .036 & .024 & .006
& \textcolor{red}{.087} & .056 & .033 & .021 & .001\\

$.35$
& \textcolor{red}{.069} & .057 & .046 & .042 & .026
& \textcolor{red}{.086} & \textcolor{red}{.065} & .043 & .031 & .011
& \textcolor{red}{.072} & .051 & .032 & .018 & .002\\

$.45$
& \textcolor{red}{.062} & .047 & .033 & .025 & .015
& .055 & .036 & .027 & .016 & .006
& .051 & .030 & .017 & .008 & .001\\

$.49$
& \textcolor{red}{.068} & \textcolor{red}{.064} & .056 & .052 & .036
& \textcolor{red}{.076} & \textcolor{red}{.071} & \textcolor{red}{.062}
& .049 & .017
& \textcolor{red}{.092} & \textcolor{red}{.072}
& \textcolor{red}{.064} & .049 & .006\\
\bottomrule
\end{tabular}
\caption{Empirical rejection probabilities, at nominal level $\alpha=5\%$, for ARFIMA$(0,d,0)$ and ARFIMA(1,d,0) for  $n=5000$, $s\in\{1,2,3\}$ and $m=\lfloor n^\gamma\rfloor$, where $\gamma\in\{.3,.4,.5,.6,.7\}$. Number of replications is 1000. Values greater than or equal to $6\%$ are shown in red.}
\label{tab:H0-5000}
\end{table}

\begin{table}[ht]
\centering
\small
\begin{tabular}{c|ccccc|ccccc|ccccc}
\toprule
& \multicolumn{5}{c|}{$s=1$}
& \multicolumn{5}{c|}{$s=2$}
& \multicolumn{5}{c}{$s=3$}\\
\cmidrule(lr){2-6}
\cmidrule(lr){7-11}
\cmidrule(lr){12-16}
$d$
& \multicolumn{5}{c|}{$\gamma$}
& \multicolumn{5}{c|}{$\gamma$}
& \multicolumn{5}{c}{$\gamma$}\\
& $.3$ & $.4$ & $.5$ & $.6$ & $.7$
& $.3$ & $.4$ & $.5$ & $.6$ & $.7$
& $.3$ & $.4$ & $.5$ & $.6$ & $.7$\\
\midrule
$.51$
& .101 & .085 & .083 & .077 & .068
& .102 & .110 & .084 & .080 & .053
& .114 & .108 & .082 & .067 & .033\\

$.55$
& .108 & .112 & .109 & .118 & .114
& .139 & .133 & .137 & .132 & .121
& .152 & .162 & .155 & .163 & .095\\

$.65$
& .185 & .249 & .301 & .363 & .383
& .283 & .354 & .435 & .521 & .548
& .346 & .437 & .516 & .585 & .564\\

$.75$
& .329 & .451 & .532 & .631 & .704
& .476 & .592 & .707 & .810 & .849
& .542 & .704 & .805 & .887 & .894\\

$1$
& .613 & .765 & .858 & .918 & .950
& .785 & .931 & .971 & .990 & .996
& .905 & .978 & .991 & .998 & .999\\

$1.25$
& .821 & .922 & .973 & .988 & .997
& .951 & .990 & .997 & .999 & 1
& .986 & .997 & .998 & .999 & 1\\
\bottomrule
\end{tabular}
\caption{Empirical rejection probabilities, at nominal level $\alpha=5\%$, for FARIMA(0,$d$,0) under $H_1$ with $n=500$, 
$s\in\{1,2,3\}$ and
$m=\lfloor n^\gamma\rfloor$, where
$\gamma\in\{.3,.4,.5,.6,.7\}$. Number of replications is 1000.}
\label{tab:H1-500}
\end{table}
\begin{table}[ht]
\centering
\small
\begin{tabular}{c|ccccc|ccccc|ccccc}
\toprule
& \multicolumn{5}{c|}{$s=1$}
& \multicolumn{5}{c|}{$s=2$}
& \multicolumn{5}{c}{$s=3$}\\
\cmidrule(lr){2-6}
\cmidrule(lr){7-11}
\cmidrule(lr){12-16}
$d$
& \multicolumn{5}{c|}{$\gamma$}
& \multicolumn{5}{c|}{$\gamma$}
& \multicolumn{5}{c}{$\gamma$}\\
& $.3$ & $.4$ & $.5$ & $.6$ & $.7$
& $.3$ & $.4$ & $.5$ & $.6$ & $.7$
& $.3$ & $.4$ & $.5$ & $.6$ & $.7$\\
\midrule
$.51$
& .078 & .070 & .065 & .066 & .067
& .084 & .069 & .069 & .068 & .066
& .087 & .079 & .072 & .071 & .061\\

$.55$
& .116 & .130 & .143 & .175 & .194
& .142 & .160 & .178 & .210 & .237
& .160 & .188 & .216 & .260 & .271\\

$.65$
& .264 & .358 & .455 & .549 & .625
& .382 & .503 & .619 & .739 & .816
& .461 & .606 & .731 & .835 & .900\\

$.75$
& .395 & .574 & .686 & .774 & .843
& .588 & .781 & .877 & .939 & .971
& .737 & .884 & .946 & .987 & .993\\

$1$
& .767 & .902 & .956 & .978 & .993
& .929 & .982 & .999 & 1 & 1
& .978 & .999 & 1 & 1 & 1\\

$1.25$
& .929 & .975 & .993 & .997 & .999
& .987 & .998 & 1 & 1 & 1
& .995 & 1 & 1 & 1 & 1\\
\bottomrule
\end{tabular}
\caption{Empirical rejection probabilities, at nominal level $\alpha=5\%$, for FARIMA(0,$d$,0) under $H_1$ with $n=5000$, 
$s\in\{1,2,3\}$ and
$m=\lfloor n^\gamma\rfloor$, where
$\gamma\in\{.3,.4,.5,.6,.7\}$. Number of replications is 1000.}
\label{tab:H1-5000}
\end{table}

\clearpage 

\subsection{Comparison with V/S test }
The V/S procedure requires estimating the memory parameter $d$ over a compact
subset of $(-1/2,\,1/2)$. More precisely, we chose the same compact $[-.4,.4]$ as suggested in \cite{MR2328526}. This restriction may induce boundary effects,
particularly when the true value of $d$ is close to $1/2$, and may partly
explain the inflated empirical size observed in this region.

Table~\ref{tab:VScomparison} shows that, for pure FARIMA$(0,d,0)$ processes,
the proposed statistic $Q_{n,m}(s)$ with the tuning parameters suggested in \eqref{tuning choice} maintains an empirical rejection
probability consistently close to the nominal level over the whole range of
values considered, whereas the empirical size of the V/S test increases
markedly as $d$ approaches $1/2$. The same behaviour is observed
for the larger sample size ($n=5000)$.

The same table also reports the corresponding results for FARIMA$(1,d,0)$
processes with positive and negative autoregressive coefficients. Although the
presence of short-range dependence slightly affects the empirical size of the
proposed test when $d$ is close to $1/2$, it remains substantially better
calibrated than the V/S procedure. Overall, these results indicate that the
proposed statistic is more robust to both long-range dependence and the
presence of a short-range autoregressive component.

Since the V/S test exhibits empirical rejection probabilities well above the
nominal level in several settings, comparing its empirical power with that of
the proposed procedure would not provide a fair assessment. Indeed, a test
with an inflated empirical size is expected to exhibit an artificially higher
power.
\begin{table}[ht]
\centering
\small
\begin{tabular}{r|ccccccccccccccc}
\toprule
\multicolumn{16}{c}{\textbf{FARIMA$(0,d,0)$, $n=500$}}\\
\midrule
$d$
& $-.49$ & $-.42$ & $-.35$ & $-.28$ & $-.21$
& $-.14$ & $-.07$ & $.00$ & $.07$ & $.14$
& $.21$ & $.28$ & $.35$ & $.42$ & $.49$\\
\midrule
$Q_{n,m}(s)$
& .02 & .03 & .04 & .02 & .03
& .02 & .04 & .03 & .02 & .04
& .03 & .03 & .03 & .03 & .05\\
V/S
& .05 & \textcolor{red}{.11} & \textcolor{red}{.10}
& \textcolor{red}{.09} & \textcolor{red}{.07}
& .03 & .03 & .02 & .01 & .02
& .02 & .02 & .05 & \textcolor{red}{.13}
& \textcolor{red}{.19}\\
\midrule

\multicolumn{16}{c}{\textbf{FARIMA$(0,d,0)$, $n=5000$}}\\
\midrule
$d$
& $-.49$ & $-.42$ & $-.35$ & $-.28$ & $-.21$
& $-.14$ & $-.07$ & $.00$ & $.07$ & $.14$
& $.21$ & $.28$ & $.35$ & $.42$ & $.49$\\
\midrule
$Q_{n,m}(s)$
& .02 & .03 & .03 & .04 & .04
& .03 & .04 & .03 & .05 & .05
& .03 & .03 & .04 & .04 & .05\\
V/S
& .02 & \textcolor{red}{.10} & \textcolor{red}{.10}
& .06 & .05
& .04 & .03 & .04 & .04 & .05
& .03 & .03 & .04 & \textcolor{red}{.12}
& \textcolor{red}{.23}\\
\midrule

\multicolumn{16}{c}{\textbf{FARIMA$(1,d,0)$ ($\phi=0.7$), $n=500$}}\\
\midrule
$d$
& $-.49$ & $-.42$ & $-.35$ & $-.28$ & $-.21$
& $-.14$ & $-.07$ & $.00$ & $.07$ & $.14$
& $.21$ & $.28$ & $.35$ & $.42$ & $.49$\\
\midrule
$Q_{n,m}(s)$
& .00 & .00 & .00 & .00 & .00
& .00 & .00 & .00 & .00 & .00
& .00 & .00 & .01 & .05 & \textcolor{red}{.13}\\
V/S
& .01 & .00 & .01 & .00 & .00
& .00 & .00 & .00 & .00 & .00
& .01 & .04 & \textcolor{red}{.08}
& \textcolor{red}{.18} & \textcolor{red}{.28}\\
\midrule

\multicolumn{16}{c}{\textbf{FARIMA$(1,d,0)$ ($\phi=-0.7$), $n=500$}}\\
\midrule
$d$
& $-.49$ & $-.42$ & $-.35$ & $-.28$ & $-.21$
& $-.14$ & $-.07$ & $.00$ & $.07$ & $.14$
& $.21$ & $.28$ & $.35$ & $.42$ & $.49$\\
\midrule
$Q_{n,m}(s)$
& .01 & .01 & .01 & .02 & .02
& .02 & .02 & .03 & .03 & .03
& .04 & .03 & .04 & .04 & \textcolor{red}{.07}\\
V/S
& .00 & .01 & .01 & .01 & .01
& .00 & .01 & .00 & .01 & .01
& .01 & .01 & .04 & \textcolor{red}{.10}
& \textcolor{red}{.19}\\
\bottomrule
\end{tabular}
\caption{Empirical rejection probabilities, at nominal level $\alpha=5\%$, of the proposed $Q_{n,m}(s)$ test and the V/S test with the recommended tuning parameters in \eqref{tuning choice}. Number of replications is 1000. Values greater than $6\%$ are shown in red.}
\label{tab:VScomparison}
\end{table}
\clearpage

\section{Proofs}

\subsection{Proof of Theorem \ref{Th1} }
\begin{proof}
The proof is based on the convergence of the random vector $Z_n(s,d)$
in \eqref{dft} and

the fact  that each denominator in $Q_{nm}(s,d)$ converges in
Probability to $D_{jj}$, more precisely 
For any $j=1,\ldots,s$, if $-1/2<d<3/2$ then, as $n\to\infty$,
\begin{equation}\label{mena limit}
\mathbb{E}\left[\left(\frac{1}{m}\sum_{h=1}^m\frac{I_{n,h}(\lambda'_j)}{\ell^{2d}}-D_{jj}\right)^2\right]\to 0.
\end{equation}

Indeed \eqref{mena limit} implies that $Q_{nm}(s,d)$ and
$\|D^{-1/2}Z_n\|^2(s,d)$ have the same asymptotic
distribution. Moreover, by  \eqref{dft} we have
$$
D^{-1/2}Z_n(s,d)\overset{d}{\longrightarrow}Z=\mathcal{N}\left(0,D^{-1/2}\Sigma(d)D^{-1/2}\right),
$$
and hence $\|D^{-1/2}Z_n\|^2$ converges in distribution to  $Z'Z$
whose  distribution is the sum of weighted independent $\chi^2(1)$
random variables given in Proposition \ref{Th1} since
$D^{-1/2}\Sigma(d)D^{-1/2}$ and $\Sigma(d)D^{-1}$ are similar matrices
and therefore have the same eigenvalues

In the rest of the proof we establish \eqref{mena limit} 
Observe that $\frac{I_{n,h}(\lambda'_j)}{(\lambda'_j)^{-1}}$
$h=1,\ldots,m$ are identically distributed, so that we have for fixed $j=1,\ldots,s$, as $n\to\infty$, 
$$
\mathbb{E}\left(\frac{1}{m}\sum_{h=1}^m\frac{I_{n,h}(\lambda'_j)}{\ell^{2d}}\right)=\mathbb{E}\left(\frac{I_{n,1}(\lambda'_j)}{\ell^{2d}}\right)\to D_{jj}
$$
by the proof of \eqref{dft} in \cite{OuldHayePhilippe2026} that relies on proving that $\textrm{Cov}(Z_n(s,d)\to\Sigma(d)$.
Hence it will be enough to show that, as $n\to\infty$,
\begin{equation}\label{lim-var}
    \textrm{Var}\left(\frac{1}{m}\sum_{h=1}^m\frac{I_{n,h}(\lambda'_j)}{\ell^{2d}}\right)\to0.
\end{equation}
We can write, for all $d\in -1/2, 3/2) $
 $$
I_{n,h}(\lambda_j')=\frac{1}{2\pi \ell}\left\vert\sum_{t=1}^{\ell}X_{t +(h-1)\ell}e^{\i t\lambda_j'}\right\vert^2$$ 

$$ 
I_{n,h}(\lambda_j')= \begin{cases}
    \frac{1}{2\pi \ell}\left\vert\sum_{t=1}^{\ell}S_t^{(h)}(Y)e^{\i t\lambda_j'}\right\vert^2,&\textrm{if }1/2\le d<3/2,\\\\
    \frac{1}{2\pi \ell}\left\vert\sum_{t=1}^{\ell}X_{t +(h-1)\ell}e^{\i t\lambda_j'}\right\vert^2,&\textrm{if }-1/2< d<1/2,
\end{cases}
$$

 where, for each $t$ such that  $1\le t\le \ell$,
 $$
 S_t^{(h)}(Y)=\sum_{u=1}^tY_{(h-1)\ell+u},\qquad h=1\ldots,m.
 $$
Let 
$$
U_{n,h}(d)=\begin{cases}
    \frac{1}{\ell^{1/2+d}}\displaystyle\sum_{k=1}^\ell\cos(k\lambda'_j)S_k^{(h)}(Y),&\textrm{if }1/2\le d<3/2,\\\\
    \frac{1}{\ell^{1/2+d}}\displaystyle\sum_{k=1}^\ell\cos(k\lambda'_j)X_{k+(h-1)\ell},&\textrm{if }-1/2< d<1/2,
\end{cases}
$$
and
$$
V_{n,h}(d)=\begin{cases}
    \frac{1}{\ell^{1/2+d}}\displaystyle\sum_{k=1}^\ell\sin(k\lambda'_j)S_k^{(h)}(Y),&\textrm{if }1/2\le d<3/2,\\\\
    \frac{1}{\ell^{1/2+d}}\displaystyle\sum_{k=1}^\ell\sin(k\lambda'_j) X_{k+(h-1)\ell},&\textrm{if }-1/2< d<1/2,
\end{cases}
$$
and let

$$
d_{\ell,h,u}^{(c)}=\begin{cases}
\frac{1}{\ell^{1/2+d}}\displaystyle\sum_{k=1}^\ell\left(\sum_{i=1}^ka_{(h-1)\ell+i-u}\right)\cos\left(\frac{2\pi jk}{\ell}\right),&\textrm{if }1/2\le d<3/2,\\\\ 
\frac{1}{\ell^{1/2+d}}\displaystyle\sum_{k=1}^\ell a_{(h-1)\ell+k-u}\cos\left(\frac{2\pi jk}{\ell}\right),&\textrm{if }-1/2<d<1/2.
\end{cases}
$$
and we note $d_{\ell,u}=d_{\ell,u}^{(c)} = d_{\ell,1,u}^{(c)} $ and $d_{\ell,h,u}=d_{\ell,h,u}^{(c)}$.

Using the stationarity of $X_k$ (when $-1/2<d<1/2$) and $Y_k=X_k-X_{k-1}$ (when $1/2\le d<3/2)$,  the left hand side of (\ref{lim-var}) can then be written as
\begin{eqnarray*}\lefteqn{
\frac{1}{m^2}\left[\sum_{i=1}^m\sum_{j=1}^m\textrm{Cov}(U_{n,i}^2(d),U_{n,j}^2(d))+\sum_{i=1}^m\sum_{j=1}^m\textrm{Cov}(V_{n,i}^2(d),V_{n,j}^2(d))+2\sum_{i=1}^m\sum_{j=1}^m\textrm{Cov}(U_{n,i}^2(d),V_{n,j}^2(d))\right]}\\
&&=\frac{1}{m}\left[\textrm{Var}(U^2_{n,1}(d))+\textrm{Var}(V_{n,1}^2)+2\textrm{Cov}(U^2_{n,1}(d),V_{n,1}^2(d))\right]+\Bigg[\frac{2}{m}\sum_{h=2}^{m-1}\left(1-\frac{h-1}{m}\right)\\
&&\left[\textrm{Cov}(U^2_{n,1}(d),U^2_{n,h}(d))+\textrm{Cov}(V_{n,1}^2(d),V^2_{n,h}(d))+\textrm{Cov}(U^2_{n,1}(d),V^2_{n,h}(d))+\textrm{Cov}(V_{n,1}^2(d),U^2_{n,h}(d))\right]\Bigg].
\end{eqnarray*}
We will show that for $-1/2<d<1/2$,
\begin{equation}\label{cov-lim0}
    \textrm{Cov}(U^2_{n,1}(d),U_{n,h}^2(d))\le C\left(\ell^{-1}+h^{2d-1}\right).
\end{equation}
The other covarainces treat similarly and this will complete the proof of (\ref{lim-var}). The case $1/2\le d<3/2$ treats similarly by writing $d=d-1+1$.\\
{\bf Proof of (\ref{cov-lim0}):}
We have for $h\ge2$, 
$$
\textrm{Cov}(U^2_{n,1}(d),U_{n,h}^2(d))=\mathbb{E}(U^2_{n,1}(d)U^2_{n,h}(d))-\left(\mathbb{E}(U^2_{n,1}(d))\right)^2,
$$
\begin{eqnarray}\label{cov-cov}\lefteqn{ 
\mathbb{E}(U^2_{n,1}(d)U_{n,h}^2(d))=\mathbb{E}\left[\left(\sum_{u=-\infty}^\ell d_{\ell,u}\epsilon_u\right)^2\left(\sum_{u=-\infty}^{h\ell} d_{\ell,h,u}\epsilon_u\right)^2\right]}\nonumber\\
&&=\left(\mathbb{E}(U^2_{n,1}(d))\right)^2+
\textrm{Var}(\epsilon_1^2)\sum_{u=-\infty}^\ell d^2_{\ell,u}d^2_{\ell,h,u}+2(\mathbb{E}(\epsilon_1^2))^2\sum_{u\neq v=-\infty}^\ell d_{\ell,u}d_{\ell,v}d_{\ell,h,u}d_{\ell,h,v}. 
\end{eqnarray}
We can easily see that for $u\le \ell$, and $d\neq0$,
$$
d^2_{\ell,h,u}\le\frac{C}{\ell^{1+2d}}\left((h\ell-u)^d-((h-1)\ell-u)^d\right)^2
$$
and for $u<0,$
\begin{equation}\label{eqq1}
d_{u,\ell}^2\le\frac{C}{\ell^{1+2d}}\left(\ell-u)^d-(-u)^d\right)^2.
\end{equation}
Hence, uniformly in $h\ge2$, using the fact that when $-\ell\le u\le-1$,
$$
d^2_{\ell,h,u}\le C\frac{1}{\ell},
$$
\begin{eqnarray*}
\sum_{u=-\ell}^{-1} d^2_{\ell,u}d^2_{\ell,h,u}&\le&
\frac{C}{\ell}\sum_{u=1}^\ell\left[\left(1+\frac{u}{\ell}\right)^d-\left(\frac{u}{\ell}\right)^d\right]^2\frac{1}{\ell}\le\frac{C}{\ell}\int_0^1\left(1+x)^d-x^d\right)^2dx\to0,
\end{eqnarray*}
$$
\sum_{u=-\infty}^{-\ell} d^2_{\ell,u}d^2_{\ell,h,u}\le
\frac{C}{\ell^{2d}}\sum_{u=\ell}^\infty(\ell+u)^{2d-2}\le C\ell^{-1},
$$
and for $0\le u\le\ell$, 
\begin{equation}\label{eqq2}
d^2_{\ell,u}\le\frac{1}{\ell^{1+2d}}\left(\sum_{i=u}^\ell \vert a_{i-u}\vert\right)^2\le \frac{C}{\ell^{1+2d}}(1+\ell-u)^{2d}, 
\end{equation}
so that, for $h\ge3$,
$$
\sum_{u=0}^\ell d^2_{\ell,u}d^2_{\ell,h,u}\le\frac{C}{\ell^{4d}}\sum_{k=1}^\ell k^{2d}(k+\ell)^{2d-2}\le \frac{C}{\ell}.
$$
In summary, when $d\neq0$, we obtained that, uniformly in $h\ge3$, as $\ell\to\infty$
\begin{equation}\label{simple no zero}
    \sum_{s=-\infty}^\ell d^2_{\ell,u}d^2_{\ell,h,u}\to0.
\end{equation}
For the cross term in \eqref{cov-cov}, using \eqref{eqq1}
and \eqref{eqq2}, we can easily see that
\begin{equation}\label{finite 2nd moment}
\sum_{u=-\infty}^\ell d^2_{\ell,u}\le C
\end{equation}
and we obtain, for $h\ge3$, uniformly in $\ell,$
\begin{eqnarray}\label{no zero}
    \sum_{u\neq v=-\infty}^\ell d_{\ell,u}d_{\ell,v}d_{\ell,h,u}d_{\ell,h,v}&\le&
\left(\sum_{u=-\infty}^\ell d^2_{\ell,u}\right)\left(\sum_{u=-\infty}^\ell d^2_{\ell,h,u}\right)\nonumber\\
&\le& C\left(\sum_{u=-\infty}^\ell d^2_{\ell,h,u}\right)\le C(h-1)^{2d-1}.
\end{eqnarray}
The proof of \eqref{simple no zero}
 and \eqref{no zero} is more straightforward  when $d=0$ using the summability of $a_j$, which completes the proof of \eqref{cov-lim0}.
\end{proof}     
\subsection{Proof of Proposition \ref{continuity1}}
\begin{proof}
We first show that,  when  $0<d<1/2$, we can rewrite the formulae defining $\Sigma(d)$ in \eqref{Sigma} under the form
\begin{equation}\label{other-form1}
\Sigma^{(c)}_{ij}=C(d)\int_{[0,1]^2}
\cos(2\pi i x)\cos(2\pi j y)\vert x-y\vert^{2d-1}dxdy,
\end{equation}
and
\begin{equation}\label{other-form2}
\Sigma^{(s)}_{ij}=C(d)\int_{[0,1]^2}
\sin(2\pi i x)\sin(2\pi j y)\vert x-y\vert^{2d-1}dxdy.
\end{equation}
In fact, Consider first $\Sigma^{(c)}_{ij}(d):$
Using Leibniz formula: if $f$ is differentiable under the integral
$$
\frac{\partial}{\partial x}\int_a^xf(y,x)dy=f(x,x)+\int_a^x\frac{\partial}{\partial x}f(y,x)
$$
and
$$
\frac{\partial}{\partial y
}\int_y^af(y,x)dx=-f(x,x)+\int_y^a\frac{\partial}{\partial y}f(y,x)dx,
$$
we get, since here we will have $f(x,x)=0$,
\begin{eqnarray*}\lefteqn{
I= \frac{(2\pi i)(2\pi j)}{2} \int_{[0,1]^2}
\sin(2\pi i x)\sin(2\pi j y)\vert x-y\vert^{2d+1}dxdy}\\
&&=\frac{(2\pi i)(2\pi j)}{2}\int_0^1\int_0^x\left[\sin(2\pi i x)\sin(2\pi j y)+\sin(2\pi j x)\sin(2\pi i y)\right](x-y)^{2d+1}dydx.
\end{eqnarray*}
\begin{eqnarray*}\lefteqn{
  (2\pi i)(2\pi j) \int_0^1\int_0^x\sin(2\pi i x)\sin(2\pi j y)(x-y)^{2d+1}dydx}\\
&&=\int_0^1 \left(\int_0^x (2\pi j)\sin(2\pi j y)(x-y)^{2d+1}\,dy\right)(2\pi i)\sin(2\pi ix)\,dx:=\int_0^1h_j(x)(2\pi i)\sin(2\pi ix)\,dx\\
&&=
\left[h_j(x)(-\cos(2\pi ix)\right]_0^1+\int_0^1h_j'(x)\cos(2\pi ix)dx\\
&&=-h_j(1)+(2d+1)\int_0^1\left(\int_0^x2\pi j(\sin(2\pi jy)(x-y)^{2d}dy\right)\cos(2\pi ix)dx\\
&&=\int_0^1(-2\pi j)\sin(2\pi jy)(1-y)^{2d+1}dy+(2d+1)\int_0^1\left(\int_y^1\cos(2\pi ix)(x-y)^{2d}dx\right)2\pi j\sin(2\pi jy)dy\\
&&:=
\int_0^1(2\pi j)\sin(2\pi jy)y^{2d+1}dy+(2d+1)\int_0^1\ell(y)2\pi j\sin(2\pi jy)dy\\
&&=-1+(2d+1)\left[\int_0^1\cos(2\pi jy)y^{2d}dy+\left[\ell_i(y)(-\cos(2\pi jy))\right]_0^1+\int_0^1\ell_i'(y)\cos(2\pi jy)dy\right]\\
&&
=-1+(2d+1)\left[\int_0^1\cos(2\pi jy)y^{2d}dy+\ell_i(0)\right]\\
&&-2d(2d+1)\int_0^1\left(\int_y^1\cos(2\pi ix)(x-y)^{2d-1}dx\right)\cos(2\pi jy)dy.
\end{eqnarray*}
Therefore, by symmetry (in $i$ and $j$), 
\begin{eqnarray*}\lefteqn{
I=-1+
(2d+1)\left[\int_0^1(\cos(2\pi jy)+\cos(2\pi iy))y^{2d}dy\right]}\\
&&-d(2d+1)\int_{[0,1]^2}\cos(2\pi ix)\cos(2\pi jy)\vert x-y\vert^{2d-1}dxdy\\
=&&-a_{ij}(d)-\frac{C(d)}{\delta(d)}\int_{[0,1]^2}\cos(2\pi ix)\cos(2\pi jy)\vert x-y\vert^{2d-1}dxdy
\end{eqnarray*}
Therefore
$$
\Sigma^{(c)}_{ij}(d)=-\delta(d)(I+a_{ij}(d))=C(d)\int_{[0,1]^2}\cos(2\pi ix)\cos(2\pi jy)\vert x-y\vert^{2d-1}dxdy,
$$which implies (\ref{other-form1}). The formula (\ref{other-form2}) is much simpler as the bracket term, in the integration by parts, is zero, and we are omitting its proof. 
We now prove the continuity. With $v=2d-1$ and $h=\vert x-y\vert$ fixed and $f(v)=h^v$, we can write (Taylor to the first order  with integral remainder
\begin{eqnarray*}
f(v)&=&\exp(v\log h)=1+v\log h+(\log h)^2\int_0^v(v-u)f(u)du\\
&=&1+v\log h+v^2(\log h)^2\int_0^1(1-t)f(ts)dt,
\end{eqnarray*}
and we have for $v<0$,  $0<h\le1$, $ts\log h\le v\log h$, so that 
$$
\int_0^1(1-t)f(tv)dt\le\int_0^1f(v)dt= h^v.
$$It follows that for  fixed $i,j$,
\begin{align*}
&\left\vert\int_{[0,1]^2}
\cos(2\pi i x)\cos(2\pi j y)\vert x-y\vert^{2d-1}dxdy-v\int_{[0,1]^2}
\cos(2\pi i x)\cos(2\pi j y)\log(\vert x-y\vert dxdy\right\vert\\
&\leq v^2\int_{[0,1]^2}(\log \vert x-y\vert)^2\vert x-y\vert^vdxdy\\
&\le v^2\left(\int_{[0,1]^2}(\log \vert x-y\vert)^4dxdy\int_{[0,1]^2}\vert x-y\vert^{2v}dxdy\right)^{1/2}=v^2\left(\frac{48}{(2v+1)(v+1)}\right)^{1/2}\\
&=O(v^2),\quad\textrm{as }v\to0.
\end{align*}
That is, we get
$$
\frac{\delta(-1/2)}{2}\frac{1}{C(d)}\Sigma^{(c)}_{ij}(d)=(2d-1)\Sigma^{(c)}_{ij}(1/2)+O(2d-1)^2
$$
and similarly for $\Sigma^{(s)}_{ij}$,
and hence the entries of the limiting normalized covariance matrix satisfy
\begin{eqnarray*}
\left(\Sigma(d)D^{-1}(d)\right)_{i,j}&=&
\frac{\Sigma_{ij}(d)}{\Sigma^{(c)}_{ii}(d)+\Sigma^{(s)}_{ii}(d)}
=
\frac{\Sigma_{ij}(1/2)+O(2d-1)}{\Sigma^{(c)}_{ii}(1/2)+\Sigma^{(s)}_{ii}(1/2)+O(2d-1)}\\\\
&\to&\left(\Sigma(1/2)D^{-1}(1/2)\right)_{i,j}\quad\textrm{as }d\to1/2,
\end{eqnarray*}
as $\Sigma^{(c)}_{ii}(1/2)$ and $\Sigma^{(s)}_{ii}(1/2)$ are positive.\end{proof}
\subsection{Proof  of   Proposition \ref{prop2}} 
\label{ap:proof2}
As $\delta \leq 1/2$ appears only in the normalization $m^{-2\delta}$ of the statistic $Q_{n.m}(s,\delta)$, it is enough to  prove the convergence \eqref{AsympH1} for $\delta = 1/2$, i.e. $Q_{n,m}(s,1/2)\overset{P}{\longrightarrow}\infty$.

For a stochastic or deterministic trend, it is enough to establish that
$$
\left(m^{-1}\right)\sum_{j=1}^s \frac{I_n(\lambda_j)}{\frac{1}{m}\sum_{i=1}^mI_{n,i}(\lambda'_j)}\overset{P}{\to}\infty.
$$
This will follow if, for fixed $j=1,\ldots,s$, we can prove 
\begin{equation}\label{loin}
\left(m^{-1}\right) \frac{I_n(\lambda_j)}{\frac{1}{m}\sum_{i=1}^mI_{n,i}(\lambda'_j)}\overset{P}{\to}\infty.
\end{equation}
{\bf Proof of (\ref{loin}) in the deterministic trend case $\mathbf{X_t=g_n(t)+Y_t}$.}\\
For fixed $j=1,\ldots,s$, and omitting the coefficient $2\pi$ in the denominator, we have from (\ref{perio}), with $I_{n,Y}$ denoting the periodogram built from $Y_1,\ldots,Y_n$,
\begin{eqnarray}\label{decomp}
I_n(\lambda_j)&=&I_{n,Y}(\lambda_j)+n^{2\beta+1}\vert D_{n,g}(j)\vert^2
+2n^{\beta+1/2}\,\,\textrm{Re}\left(D_{n,g}(j)\overline{D_{n,Y}(j)}\right)
\end{eqnarray}
where
$$
D_{n,g}(j)=\frac{1}{\sqrt{2\pi}}\sum_{t=1}^ng\left(\frac{t}{n}\right)e^{\i 2\pi t j/n}\frac{1}{n},
$$
and
$$
D_{n,Y}(j)=\frac{1}{\sqrt{2\pi n}}\sum_{t=1}^nY_te^{\i 2\pi t j/n}.
$$
Since $Y_t$ satisfies Theorem \ref{Th1}'s assumptions, we have as $n\to\infty$, 
$$
\mathbb{E}(I_{n,Y}(\lambda_j))\sim L_j(d) f(\lambda_j)\sim L_j(d) n^{2d}.
$$
Under assumption (\ref{nonzero}), we obtain, via a Riemann sum approximation of the integral,
$$
\vert D_{n,g}(j)\vert^2\sim c>0, \textrm{ as }n\to\infty,
$$
where $c$ is a positive constant that may change from one expression to another.
Finally, for the cross-term, we have
\begin{eqnarray*}\label{cross}
\mathbb{E}\vert D_{n,g}(j)\overline{D_{n,Y}(j)}\vert&=&\vert D_{n,g}(j)\vert\,\mathbb{E}\vert D_{n,Y}(j)\vert\\
&\le&\vert D_{n,g}(j)\vert\,\mathbb{E}^{1/2}\left(\vert D_{n,Y}(j)\vert^2\right)
=\vert D_{n,g}(j)\vert\,\mathbb{E}^{1/2}(I_{n,Y}(\lambda_j))\le cn^d.
\end{eqnarray*}
Therefore
\begin{equation}\label{num1}
\frac{I_n(\lambda_j)}{ n^{2\beta+1}}\overset{P}{\to}c.
\end{equation}
We now analyse the behaviour of the denominator term
$$
\frac{1}{m}\sum_{i=1}^mI_{n,i}(\lambda'_j)=\frac{1}{m}\sum_{i=1}^m\frac{1}{2\pi\ell}\left\vert\sum_{t=(i-1)\ell+1}^{i\ell}(Y_t+g_n(t))e^{\i t\lambda'_j}\right\vert^2.
$$
Following the same decomposition as in (\ref{decomp}), we identify three components to examine: a  stochastic term, a deterministic trend term, and a cross term.\\
Denote $I_{n,i,Y}$  the periodogram  built from the $i$th epoch of  the stationary process $Y_t$.
Using (\ref{lim-var}), we obtain
$$
\frac{1}{m}\sum_{i=1}^mI_{n,i,Y}(\lambda'_j)=O_P(\ell^{2d}).
$$
For the trend part,  we have
\begin{eqnarray}\label{break}\lefteqn{
\frac{1}{m}\sum_{u=1}^m\frac{1}{\ell}\left\vert\sum_{t=1}^\ell g_n(t+(u-1)\ell)e^{\i t\lambda'_j}\right\vert^2}\nonumber\\
&&=\ell^{-1}n^{2\beta}
\sum_{t=1}^\ell\sum_{s=1}^\ell\left(\sum_{u=1}^mg\left(\frac{t+(u-1)\ell}{n}\right)
g\left(\frac{s+(u-1)\ell}{n}\right)\frac{1}{m}\right)
e^{\i (t-s)\lambda'_j}\nonumber\\
&&\sim\ell^{-1}n^{2\beta}
\sum_{t=1}^\ell\sum_{s=1}^\ell\left[\int_0^1g\left(\frac{t}{n}+x\right)g\left(\frac{s}{n}+x\right)dx\right]e^{\i (t-s)\lambda'_j}\nonumber\\
&&=\ell^{-1}n^{2\beta}\int_0^1\left(\left\vert\sum_{t=1}^\ell g\left(\frac{t}{n}+x\right)e^{\i t\lambda'_j}\right\vert^2\right)dx.
\end{eqnarray}
(In the integrals above we extended $g$ beyond  (0,1) with value 0).
If $g$ is continuous over [0,1] then it is uniformly continuous and hence 
$$
g\left(\frac{t}{n}+x\right)=g(x)+\epsilon_n(t,x)
$$
where $\epsilon_n(t,x)\to0$ uniformly in $t=1,\ldots,\ell$ and $x$, and hence, since 
\begin{equation}\label{sumzero1}
\sum_{t=1}^\ell e^{\i t\lambda_j'}=0,
\end{equation}
the integrand in the last expression in (\ref{break}) is $o(\ell^2)$. Therefore, we have
$$
\frac{1}{m}\sum_{u=1}^m\frac{1}{\ell}\left\vert\sum_{t=1}^\ell g_n(t+(u-1)\ell)e^{\i t\lambda'_j}\right\vert^2=o\left(n^{2\beta}\ell\right).
$$
If $g$ is piece-wise continuous on [0,1] then there exists $M$ sub-intervals  $(A_k,B_k)$, $j=1,\ldots,M$,  such that $g$ is continuous on such intervals with finite limits at $A_k$ and $B_k$. The previous result (when $g$ is continuous) remains true for each interval $[A_k,B_k]$.    Hence we find that  (\ref{break}) is still bounded by $Mn^{2\beta}o(\ell)=o\left(n^{2\beta}\ell\right)$.

For the cross-term, and similarly to (\ref{cross}),  we easily obtain  that
\begin{eqnarray*}\lefteqn{
\mathbb{E}\left(\frac{1}{m}\sum_{u=1}^m
\frac{1}{\ell}\sum_{t=1}^\ell\left\vert g_n\left(t+(u-1)\ell\right)e^{\i t\lambda'_j}\right\vert
\left\vert\sum_{t=1}^\ell Y_{t+(u-1)\ell} e^{\i t\lambda'_j}\right\vert\right)}\\
&&=\frac{1}{n}
\ell^{1/2}\sum_{u=1}^m\sum_{t=1}^\ell\left\vert g_n\left(t+(u-1)\ell\right)e^{\i t\lambda'_j}\right\vert
\mathbb{E}\left[\frac{1}{\sqrt{\ell}}\left\vert\sum_{t=1}^\ell Y_{t+(u-1)\ell} e^{\i t\lambda'_j}\right\vert\right]\\
&&=
\ell^{1/2}\left(\sum_{u=1}^m\sum_{t=1}^\ell\left\vert g_n\left(t+(u-1)\ell\right)e^{\i t\lambda'_j}\right\vert\frac{1}{n}\right)
\mathbb{E}\left[\frac{1}{\sqrt{\ell}}\left\vert\sum_{t=1}^\ell Y_t e^{\i t\lambda'_j}\right\vert\right]\\
&&\le\ell^{1/2}n^\beta\sum_{k=1}^n\left\vert g\left(\frac{k}{n}\right)\right\vert\frac{1}{n}\,\,\mathbb{E}^{1/2}\left(I_{n,1,Y}(\lambda'_j)\right)\\
&&\sim n^\beta\ell^{1/2}\,\,\int_0^1\vert g(x)\vert dx\,\,\left[L_j(d)f(\lambda'_j)\right]^{1/2}\\
&&=cn^\beta\ell^{1/2+d}.
\end{eqnarray*}
Finally, the denominator is of order $\ell \,n^{2\beta}\,o_P(1)$. Therefore, using (\ref{num1}), for each fixed frequency $\lambda_j$, 
$$
\left(\frac{n}{\ell}\right)^{-1}\frac{I_n(\lambda_j)}{\frac{1}{m}\sum_{i=1}^mI_{n,i}(\lambda'_j)}=
\frac{\ell}{n}\frac{n^{2\beta+1}}{n^{2\beta}\,\,\ell\,\, o_P(1)}=\frac{1}{o_P(1)}
\overset{P}{\to}\infty,
$$
which implies that $Q_{n,m}(s,1/2)$ goes to infinity as $n\to\infty$.
\subsection{Proof of Proposition \ref{prop22}}
\begin{proof} Let  $s$ be fixed and let $d_n$ be an arbitrary sequence in $(-1/2,1/2]$ converging to $d\in(-1/2,1/2]$. Using Proposition \ref{continuity1} and since $Q(s,d)$ is a quadratic form of a Gaussian vector then we obtain that $Q(s,d_n)$ converges to $Q(s,d)$ in distribution. Since $Q(s,d)$ has a continuous  distribution function $F_{s,d}$, by Polya's theorem (see e.x. \cite{lehmann1999elements}, Theorem 2.6.1), $$\|F_{s,d_n}-F_{s,d}\|\to0,$$ 
where $\|.\|$ is the supremum norm.
Moreover, the inversibility of $F_{s,d}$ implies that (with $F=F_{s,d}$ and $F_n=F_{s,d_n}$)
$$
F\left(F^{-1}_n(\alpha)\right)=\left\vert F\left(F^{-1}_n(\alpha)\right)-F_n\left(F^{-1}_n(\alpha)\right)+
F_n\left(F^{-1}_n(\alpha)\right)\right\vert\le\|F_n-F\|+\alpha
\to\alpha$$
which means that $F_n^{-1}(\alpha)\to F^{-1}(\alpha)$ since $F^{-1}$ is continuous. This completes the proof.
\end{proof}
\subsection{Proof of Proposition \ref{empirical size 1}}
\begin{proof}
{\bf Asymptotic probability of $\mathbf{R_n}$ under $H_0$ }\\
According to Theorem \ref{Th1} and the properties of the local Whittle estimator defined in (\ref{lwe}) we have 
$$
Q_{n,m}(s,\widehat d)\overset{\mathcal{D}}{\to} Q(s,d).
$$
As $Q(s,d)$ has a continuous cumulative distribution function (cdf) $F_{d,s}$ (see proof of Proposition \ref{prop22}), the cdf of $ Q_{n,m}(s,\widehat d)$ converges uniformly to the cdf of  $Q(s,d)$ 
$$ P(Q_{m,n}(s,\widehat d)>q_\alpha(s,\widehat d))  \sim  1 - F_{d,s} (q_\alpha(s,\widehat d)) , \quad \text{as } n\to\infty$$
Moreover, according to Proposition \ref{prop22}, the function  $d\mapsto q_\alpha(s,d)$ is continuous, and hence  
\begin{equation}\label{limit alpha1}
P_d(Q_{m,n}(s,\widehat d)>q_\alpha(s,\widehat d))  \to \alpha , \quad \text{as } n\to\infty.
\end{equation}

{\bf Asymptotic probability of $\mathbf{R_n}$ under $H_1$  } We have 
 $$
  Q_{n,m}(s,\widehat d)=e^{-2(\widehat d-1/2  )\ln m}Q_{n,m}(s,1/2) \geq Q_{n,m}(s,1/2) \overset{P}{\longrightarrow} \infty,
  $$
by Proposition \ref{Th1}.\end{proof}

\bibliographystyle{apalike}
\bibliography{jtsa_new}

@unpublished{OuldHayePhilippe2026,
  author    = {Mohamedou {Ould Haye} and Anne Philippe},
  title     = {From nonstationarity to stationarity via {$1/f$} noise:
               discrete Fourier transforms and sample mean asymptotics for testing},
  year      = {2026},
  note      = {Preprint, 	arXiv:2605.28339}

}

@book{lehmann1999elements,
  title={Elements of large-sample theory},
  author={Lehmann, Erich Leo},
  year={1999},
  publisher={Springer}
}

@article{pptest,
 doi       = {10.2307/2336182},
 title     = {Testing for a Unit Root in Time Series Regression},
 author    = {Peter C. B. Phillips and Pierre Perron},
 publisher = {Oxford University Press},
 journal   = {Biometrika},
 issn      = {0006-3444,1464-3510},
 year      = {1988},
 volume    = {75},
 issue     = {2},
 pages     = {335--346},
 url       = {http://doi.org/10.2307/2336182}
}

@Manual{R,
    title = {R: A Language and Environment for Statistical Computing},
    author = {{R Core Team}},
    organization = {R Foundation for Statistical Computing},
    address = {Vienna, Austria},
    year = {2018},
    url = {https://www.R-project.org/},
  }

@article {MR3505787,
    AUTHOR = {Bailey, Natalia and Giraitis, Liudas},
     TITLE = {Spectral approach to parameter-free unit root testing},
   JOURNAL = {Comput. Statist. Data Anal.},
  FJOURNAL = {Computational Statistics \& Data Analysis},
    VOLUME = {100},
      YEAR = {2016},
     PAGES = {4--16},
      ISSN = {0167-9473},
   MRCLASS = {62M15 (60G12 62G10 62M10)},
  MRNUMBER = {3505787},
MRREVIEWER = {Zuzana Pr\'{a}\v{s}kov\'{a}},
       DOI = {10.1016/j.csda.2015.05.002},
       URL = {https://doi.org/10.1016/j.csda.2015.05.002},
}

@article {Dalla,
    AUTHOR = {Dalla, Violetta and Giraitis, Liudas and Hidalgo, Javier},
     TITLE = {Consistent estimation of the memory parameter for nonlinear
              time series},
   JOURNAL = {J. Time Ser. Anal.},
  FJOURNAL = {Journal of Time Series Analysis},
    VOLUME = {27},
      YEAR = {2006},
    NUMBER = {2},
     PAGES = {211--251},
      ISSN = {0143-9782},
   MRCLASS = {62M07 (62M15)},
  MRNUMBER = {2235845},
MRREVIEWER = {Guy Jumarie},
       DOI = {10.1111/j.1467-9892.2005.00464.x},
       URL = {https://doi.org/10.1111/j.1467-9892.2005.00464.x},
}

@book {MR2977317,
    AUTHOR = {Giraitis, Liudas and Koul, Hira L. and Surgailis, Donatas},
     TITLE = {Large sample inference for long memory processes},
 PUBLISHER = {Imperial College Press, London},
      YEAR = {2012},
     PAGES = {xvi+577},
      ISBN = {978-1-84816-278-5; 1-84816-278-2},
   MRCLASS = {62-02 (60G10 62Mxx)},
  MRNUMBER = {2977317},
MRREVIEWER = {Gilles Teyssi\`ere},
       DOI = {10.1142/p591},
       URL = {https://doi.org/10.1142/p591},
}

@article {MR2328526,
    AUTHOR = {Giraitis, Liudas and Leipus, Remigijus and Philippe, Anne},
     TITLE = {A test for stationarity versus trends and unit roots for a
              wide class of dependent errors},
   JOURNAL = {Econometric Theory},
  FJOURNAL = {Econometric Theory},
    VOLUME = {22},
      YEAR = {2006},
    NUMBER = {6},
     PAGES = {989--1029},
      ISSN = {0266-4666},
   MRCLASS = {Expansion},
  MRNUMBER = {2328526},
       DOI = {10.1017/S026646660606049X},
       URL = {https://doi.org/10.1017/S026646660606049X},
}

@Article{Gromykov,
  author={Gromykov, Gennadi and Ould Haye, Mohamedou and Philippe, Anne },
  title={{A frequency-domain test for long range dependence}},
  journal={Statistical Inference for Stochastic Processes},
  year=2018,
  volume={21},
  number={3},
  pages={513-526},
  month={October},
  doi={10.1007/s11203-017-9164-6},
  url={https://ideas.repec.org/a/spr/sistpr/v21y2018i3d10.1007_s11203-017-9164-6.html}
}
\end{document}